\definecolor{vert}{rgb}{0,0.6,0}
\theoremstyle{plain}
\newtheorem{thm}{Theorem}[section]
\newtheorem{lem}[thm]{Lemma}
\newtheorem{prop}[thm]{Proposition}
\theoremstyle{remark}
\newtheorem{rem}{\bf{Remark}}
\numberwithin{equation}{section}
\newcommand{\E}{\mathbb{E}}
\newcommand{\M}{\mathbb{M}}
\newcommand{\N}{\mathbb{N}}
\newcommand{\R}{\mathbb{R}}
\newcommand{\T}{\mathbb{T}}
\newcommand{\Z}{\mathbb{Z}}
\newcommand{\cL}{\mathcal{L}}
\newcommand{\Li}{L^{\infty}}
\newcommand{\Lip}{{\rm Lip\,}}
\newcommand{\Q}{\mathbb{T}^{n}\times(0,\infty)}
\newcommand{\cQ}{\mathbb{T}^{n}\times[0,\infty)}
\newcommand{\al}{\alpha}
\newcommand{\gam}{\gamma}
\newcommand{\del}{\delta}
\newcommand{\ep}{\varepsilon}
\newcommand{\lam}{\lambda}
\newcommand{\sig}{\sigma}
\newcommand{\Del}{\Delta}
\newcommand{\pl}{\partial}
\newcommand{\tr}{{\rm tr}\,}
\begin{document}

\title[Large-time behavior for obstacle problems]
{Large-time behavior for obstacle problems\\ 
for degenerate viscous Hamilton--Jacobi equations}

\author[H. MITAKE, H. V. TRAN]
{Hiroyoshi Mitake and Hung V. Tran}

\thanks{
The work of HM was partially supported by KAKENHI  
\#24840042, JSPS and Grant for Basic Science Research Projects from the 
Sumitomo Foundation. 
}

\address[H. Mitake]
{
Department of Applied Mathematics, 
Faculty of Science, Fukuoka University, 
Fukuoka 814-0180, Japan}
\email{mitake@math.sci.fukuoka-u.ac.jp}

\address[H. V. Tran]
{
Department of Mathematics, 
The University of Chicago, 5734 S. University Avenue, Chicago, Illinois 60637, USA}
\email{hung@math.uchicago.edu}

\keywords{Large-time Behavior; Hamilton--Jacobi Equations; Degenerate Parabolic Equations; Obstacle Problems; Nonlinear Adjoint Methods; Viscosity Solutions}
\subjclass[2010]{
35B40, 
35F55, 
49L25 
}

\maketitle

\begin{abstract}
Cagnetti, Gomes, Mitake and Tran (2013) introduced a new idea to study the large time behavior for degenerate viscous Hamilton--Jacobi equations. In this paper, we apply the method to study the large-time behavior of the solution to the obstacle problem for degenerate viscous Hamilton--Jacobi equations.  
We establish the convergence result under rather general assumptions. 
\end{abstract}


\tableofcontents


\section{Introduction and Main Result}
In this paper we study the large time behavior of the solution to the obstacle problem for degenerate viscous Hamilton--Jacobi equations: 
\begin{equation} \notag
{\rm (C)} \qquad 
\begin{cases}
\max\{u_t -\tr\big(A(x)D^2u\big) + H(x,Du), u-\psi\}=0 & \text{ in } \Q, \\
u(x,0)=u_{0}(x) & \text{ on } \T^n,
\end{cases}
\end{equation}
where $\T^n$ is the $n$-dimensional torus $\R^n / \Z^n$.
Here $u_t, Du, D^2u$ denote the partial derivative with respect to $t$, 
the spatial gradient and Hessian of the unknown
$u:\Q \to \R$, respectively.  
The functions $H:\T^n \times \R^n \to \R$, 
$A:\T^n \to \M^{n\times n}_{\text{sym}}$, and $\psi:\T^n\to\R$ 
are the given Hamiltonian, diffusion matrix, and obstacle function,  respectively, where $\M^{n\times n}_{\text{sym}}$ is the set of $n\times n$
real symmetric matrices. 
We assume the following conditions (H1)--(H4) throughout this paper:  
There exist some constants $C,\theta>0$ so that
\begin{itemize}
\item[(H1)] $H \in C^2(\T^n \times \R^n)$, and 
$D^2_{pp}H \ge 2\theta I_n$, where $I_n$ is the identity matrix of size $n$, 
\item[(H2)] 
$|D_x H(x,p)| \le C(1+|p|^2)$,  
\item[{(H3)}] 
$A(x)=(a^{ij}(x))\in \M^{n\times n}_{\text{sym}}$ with $A(x)\ge 0$,  and $A\in C^2(\T^n)$, 
\item[{(H4)}] 
$\psi\in C^2(\T^n)$ and $u_0 \in \Lip(\T^n)$ satisfying the compatibility condition to the initial data, i.e., $u_0 \leq \psi$ on $\T^n$.
\end{itemize}
\smallskip

Obstacle problem is an important class of problems in analysis and applied mathematics,
which has been developed first in the study of PDEs, variational inequalities and free boundary problems. 
We refer to \cite{KS, F} for classical references and 
to \cite{C} for the regularity theory of the free boundary in the context 
of the obstacle problem. 
The obstacle problem (C) was studied first in the context of viscosity solutions 
in the early 1980s  by Lions in his book \cite{L} and 
it is well-known that the obstacle problem (C) has the background 
of the control problem (see \cite{B, BCD} for instance). 
There are also many applications in the study of financial mathematics. 
See \cite{P,FPP,B2,Hy} and references therein for more details. 
We refer to \cite{BFZ} for the application to the reachability and optimal time. 
Note also that problem (C) is equivalent to the followings
\begin{equation} \notag
\begin{cases}
u_t -\tr\big(A(x)D^2u\big) + H(x,Du)\le0, \ \text{and} \ u-\psi\le0 & \text{ in } \Q, \\
\big(u_t -\tr\big(A(x)D^2u\big) + H(x,Du)\big)\cdot (u-\psi)=0 & \text{ in } \Q, \\
u(x,0)=u_{0}(x) & \text{ on } \T^n,  
\end{cases}
\end{equation}
and hence (C) is also called a \textit{variational inequality of obstacle type} or 
a \textit{complementarity system}. 
\smallskip

Our main concern in this paper is the large-time asymptotics for (C). 
Let us recall quickly the study of the asymptotic behavior of solutions to 
the initial-value problem for possibly degenerate viscous Hamilton--Jacobi equations 
including the first order cases and the non-degenerate viscous cases:  
\begin{equation}\label{eq:HJ}
u_t-\tr\big(A(x)D^2u\big)+H(x,Du)=0, \quad \text{in} \ \Q,  
\end{equation}
which has got much attention and was studied extensively since the late 1990s. 
See \cite{NR, BS, DS,F2,I2008} for Hamilton--Jacobi equations (i.e., when $A\equiv 0$),  
\cite{BS2,IS} for non-degenerate viscous Hamilton--Jacobi equations (i.e. when $A$ is non-degenerate) and \cite{MT3} for  weakly coupled systems. The list of references on this study
here is by no means complete and we refer the readers 
to the aforementioned papers for a complete picture
of the subject.   
Note that though the large time behaviors are of the same type,
the methods in the above references of obtaining convergence results 
are completely different. 
The methods for first order cases are based on a finite speed of propagation, 
a stability of extremal curves in the context of the dynamical approach 
or stability of solutions for time large in the context of the PDE approach, 
while that for the non-degenerate viscous cases are based on the strong comparison principle.
It is clear that these methods are not applicable to the general degenerate viscous case \eqref{eq:HJ} until now because of the lack of both the finite speed of propagation and the strong comparison principle.

Very recently, Cagnetti, Gomes, and the authors in \cite{CGMT}
introduced a new and unified approach to establish large time asymptotics for the general case \eqref{eq:HJ}, where $A$ could be degenerate. 
Under (H1)--(H3), we proved that
\begin{equation}\label{CGMT-g}
u(\cdot,t)+c_H t \to v \quad  \text{uniformly in} \ \T^n \ \text{as} \ t\to\infty ,
\end{equation}
where $(v,c_H)\in C(\T^n)\times \R$ is a solution of the \textit{ergodic problem}: 
\[
{\rm(E)} \qquad
-\tr\big(A(x)D^2v\big)+H(x,Dv)=c_H \quad \text{in} \ \T^n.   
\]
It was established also in \cite{CGMT} that $c_H$ is unique, but $v$ is not unique
 in general even up to some additive constants,
which makes the convergence analysis very delicate. 
We were able to obtain \eqref{CGMT-g} by using the stability of the viscosity solutions 
together with the deep fact that 
\begin{equation}\label{key:stability}
u_t+c_H \to 0 \ \text{as} \ t \to \infty \ \text{in the viscosity sense}, 
\end{equation}
which has been achieved in light of the new conservation of energy identity
and estimates comparing the solutions of the approximated Cauchy problems and ergodic problems via the nonlinear adjoint method introduced by Evans \cite{Ev1}. 
We refer to \cite{Ev1, T1,CGT1,CGT2} for the details of the nonlinear adjoint method. 
See also \cite{LN} for another direction of the study of the large time behavior for 
degenerate parabolic equations. 
\smallskip

The analysis of the large time behavior for obstacle problems turns out to be a bit more complicated because of the appearance of the obstacles. 
There are cases where we do not see the obstacles at all 
because the solution $u$ of (C) always stays below $\psi$ for $t$ large enough.
The behavior of $u(\cdot,t)$ as $t\to \infty$ then is basically the same as in the usual case in \cite{CGMT}.
There are however cases where we have to take into account the obstacle $\psi$ and in general
$u(\cdot,t)$ touches $\psi$ at some parts for all time $t$ large enough. We have to provide a different analysis
for this case, which is based on the general method in \cite{CGMT}.

The large time asymptotics for (C) is related to the behavior of two ergodic problems. The first ergodic problem is the usual ergodic problem (E) without the obstacle,
which gives the characterization to the two phenomena described above.

\begin{prop}\label{prop:e.CGMT}
There exists  a solution $(v,c_H)\in C(\T^n)\times\R$ of {\rm(E)}. 
Moreover, $c_H$ is unique. We call $c_H$ the ergodic constant, which is  uniquely determined by $H$ and $A$ by the formula
\begin{equation}\label{c_H}
c_H=\inf\{a\in\mathbb{R}\mid \text{there exists} \ v \ \text{such that} \  
-\tr\big(A(x)D^2v\big)+H(x,Dv)\le a \ \text{in} \ \T^n\}. 
\end{equation}
\end{prop}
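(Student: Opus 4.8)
The plan is to establish existence and the formula \eqref{c_H} via the classical vanishing discount (small-$\lambda$) approximation, which is well suited to the degenerate viscous setting since it relies only on the comparison principle for uniformly elliptic/parabolic problems and on a priori Lipschitz bounds coming from (H1)--(H3). First I would fix $\ep>0$ and $\lambda>0$ and consider the regularized discounted equation $\lambda v^{\ep,\lambda}-\ep\Delta v^{\ep,\lambda}-\tr\big(A(x)D^2v^{\ep,\lambda}\big)+H(x,Dv^{\ep,\lambda})=0$ on $\T^n$. Since the operator is now uniformly elliptic and the Hamiltonian is coercive by (H1) (convexity with $D^2_{pp}H\ge 2\theta I_n$ forces at least quadratic growth in $p$), standard viscosity-solution/Perron arguments together with Bernstein-type estimates give a unique smooth solution with $\|\lambda v^{\ep,\lambda}\|_{L^\infty}\le C$ and, crucially, a gradient bound $\|Dv^{\ep,\lambda}\|_{L^\infty}\le C$ \emph{independent of} $\ep$ and $\lambda$ (here (H2) controls the $x$-dependence in the Bernstein computation). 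I would then send $\ep\to 0$ to obtain a Lipschitz viscosity solution $v^\lambda$ of $\lambda v^\lambda-\tr\big(A(x)D^2v^\lambda\big)+H(x,Dv^\lambda)=0$ with the same uniform bounds.

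Next, the normalization step: set $w^\lambda:=v^\lambda-\min_{\T^n}v^\lambda$, so $\{w^\lambda\}$ is uniformly bounded in $C^{0,1}(\T^n)$ and $\{\lambda v^\lambda\}$ is uniformly bounded, hence along a subsequence $\lambda_j\to0$ we have $w^{\lambda_j}\to v$ uniformly and $\lambda_j v^{\lambda_j}\to -c_H$ uniformly for some constant (the limit is constant because $\lambda v^\lambda - \lambda\min v^\lambda = \lambda w^\lambda \to 0$ uniformly, so $\lambda_j v^{\lambda_j}$ and the constant $\lambda_j\min v^{\lambda_j}$ have the same limit). Passing to the limit in the viscosity sense in the equation for $w^{\lambda_j}$, using $\lambda_j w^{\lambda_j}\to 0$, yields that $(v,c_H)$ solves (E).

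For the formula \eqref{c_H}, let $A^*$ denote the right-hand side infimum. If $v$ solves (E), then $v$ itself is an admissible competitor with $a=c_H$, so $A^*\le c_H$. Conversely, if $-\tr\big(A(x)D^2\phi\big)+H(x,D\phi)\le a$ in $\T^n$ for some (say, smooth, or viscosity sub-)solution $\phi$, then $\phi$ is a subsolution of (E) with $c_H$ replaced by $a$; comparing $\phi$ with the solution $v$ of the discounted problems or, more directly, evaluating at a point where $v-\phi$ attains its maximum over $\T^n$ and using the subsolution/supersolution inequalities gives $a\ge c_H$, whence $A^*\ge c_H$. This simultaneously proves uniqueness of $c_H$: any two ergodic constants $c_1,c_2$ satisfy $c_1\le A^*\le c_2$ and $c_2\le A^*\le c_1$.

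The main obstacle is the $\ep$- and $\lambda$-independent gradient estimate for $v^{\ep,\lambda}$ in the \emph{degenerate} case $A\ge0$: one cannot differentiate the equation and use ellipticity of $A$ to absorb bad terms, so the Bernstein argument must run on the artificially regularized equation and one must check that the constant coming from $\ep\Delta$ can be controlled using only the uniform convexity from (H1) and the quadratic bound (H2) on $D_xH$ — the $\ep\Delta$ contribution to the estimate on $|Dv^{\ep,\lambda}|^2$ is nonnegative at an interior max and can simply be discarded, leaving exactly the first-order Bernstein inequality whose good term $\theta|Dv|^3$-type coercivity dominates the $C(1+|Dv|^2)$ error. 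Since this and all the technical steps above are essentially those already carried out in \cite{CGMT}, I would simply cite \cite{CGMT} for Proposition~\ref{prop:e.CGMT} rather than reproduce the proof.
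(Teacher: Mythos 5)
Your overall strategy coincides with the paper's: regularize with a discount $\alpha$ and a viscosity term $\delta$, run Bernstein's method on $\varphi=|Dv^{\alpha,\delta}|^2/2$, send $\delta\to 0$ then $\alpha\to 0$, and obtain the variational formula for $c_H$ by a comparison argument with a small discount. So the route is the same, and citing \cite{CGMT} is indeed what one would otherwise do; the paper reproduces the argument only ``for self-containedness.''

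However, your description of the key Bernstein step contains a genuine error. You claim that ``the $\ep\Delta$ contribution \ldots is nonnegative at an interior max and can simply be discarded, leaving exactly the first-order Bernstein inequality.'' This is false on two counts. First, the degenerate operator $-\tr(A(x)D^2v)$ does \emph{not} reduce to the first-order situation: differentiating it produces the bad term $a^{ij}_{x_k}v_{x_ix_j}v_{x_k}$, which is quadratic in $D^2v$ and linear in $Dv$ and cannot be absorbed by the first-order good term $\theta|Dv|^2\cdot|Dv|$-type coercivity alone. The paper controls it using the Stroock--Varadhan trace inequality \eqref{SV-a2} to dominate it by $\tfrac12 a^{ij}v_{x_ix_k}v_{x_jx_k}+C|Dv|^2$. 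Second, the $\delta|D^2v^{\alpha,\delta}|^2$ term is \emph{not} discarded: it is essential. The paper combines it with $a^{ij}v_{x_ix_k}v_{x_jx_k}$ to bound from below $c\,(a^{ij}v_{x_ix_j}+\delta\Delta v)^2$, and only then can one substitute the equation $a^{ij}v_{x_ix_j}+\delta\Delta v=\alpha v + H(x,Dv)$ to produce the coercive term $cH(x,Dv)^2$. If you drop $\delta|D^2v|^2$, you cannot get rid of the $\delta\Delta v$ that remains after substituting, and the argument collapses. Finally, for the formula \eqref{c_H} you propose to ``evaluate at a max of $v-\phi$,'' but viscosity solutions are not differentiable, so a pointwise evaluation is not licit; the paper instead normalizes so that the subsolution $w$ at level $c<c_H$ dominates a solution $v$, inserts a small discount $\alpha w\le \alpha v$, and invokes the comparison principle for the strictly monotone (discounted) equation to get $w\le v$, a contradiction — which is both rigorous and cleaner.
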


More precisely, we observe that, if $c_H>0$, the behavior of $u$ as $t\to\infty$ 
is not affected at all by the obstacle. 
This could be explained heuristically as follows. Pick any $(v,c_H)$ to be a solution of (E) and notice that $v-c_H t$ is a solution of \eqref{eq:HJ}. 
Since $c_H>0$, we obtain the existence of some positive constant $T_0>0$ so that $v-c_H t <\psi$ for all $t \geq T_0$. In particular, $v-c_Ht$ is a solution of (C) for $t\geq T_0$, and the behavior of $v-c_Ht$ as $t\to \infty$ clearly is not involved by $\psi$.
The appearance of the two phenomena where the asymptotics depend on the sign of $c_H$ was first found in the study of the Cauchy-Dirichlet problem in \cite{M2,M3,Ta}.

The most interesting case is when $c_H\leq 0$. One could see that
 the above reasoning does not work, and actually we need to consider the second ergodic problem, which does involve the obstacle $\psi$:
\begin{equation*}
{\rm (EO)}\quad
\max\{-\tr\big(A(x)D^2V\big)+H(x,DV), V-\psi\}=0 \quad \text{in} \ \T^n.
\end{equation*}
Existence result for the above can be described by the sign of $c_H$. In fact, 
if $c_H\le0$, then it is rather easy to prove the existence of solutions by 
the Perron method. 

\begin{prop}\label{prop:EO}
There exists a solution $V\in C(\T^n)$ of {\rm (EO)} if and only if $c_H\le0$. Moreover, if $c_H<0$, then $V$ is unique.
\end{prop}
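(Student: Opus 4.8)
The plan is to treat the two implications separately and then establish uniqueness; throughout I would use Proposition~\ref{prop:e.CGMT}, which furnishes a solution $(v,c_H)$ of (E). The implication ``$\Rightarrow$'' is immediate: if $V\in C(\T^n)$ solves (EO), then $V$ is in particular a viscosity subsolution, so $-\tr(A(x)D^2V)+H(x,DV)\le 0$ in $\T^n$; inserting $a=0$ into the formula \eqref{c_H} for $c_H$ gives $c_H\le 0$ at once.

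For the implication ``$\Leftarrow$'', assume $c_H\le 0$ and run Perron's method with an ordered pair of barriers. First I would note that $\psi$ is trivially a supersolution of (EO): whenever a smooth $\phi$ touches $\psi$ from below, the entry $\psi-\psi=0$ already makes the maximum nonnegative. Next, with $(v,c_H)$ as above and $C>0$ chosen large enough that $\tilde v:=v-C\le\psi$ on the compact torus, one has $-\tr(AD^2\tilde v)+H(x,D\tilde v)=c_H\le 0$ and $\tilde v\le\psi$, so $\tilde v$ is a subsolution of (EO) lying below $\psi$. Setting $W:=\sup\{w:\ \tilde v\le w\le\psi,\ w\text{ is a subsolution of (EO)}\}$, the standard stability argument gives that the upper envelope $W^*$ is a subsolution, while for the supersolution property of the lower envelope $W_*$ the usual bump construction is needed only at points $x_0$ with $W_*(x_0)<\psi(x_0)$ --- at points where $W_*(x_0)=\psi(x_0)$ the inequality $\max\{\cdots\}\ge 0$ holds automatically --- where it reduces to the familiar bump for the equation $-\tr(AD^2W)+H(x,DW)=0$. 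The one substantial ingredient is an a priori regularity estimate: every bounded viscosity subsolution $w$ of $-\tr(A(x)D^2w)+H(x,Dw)\le 0$ is Lipschitz, with constant depending only on the data and $\|w\|_{L^\infty(\T^n)}$; this follows from coercivity of $p\mapsto H(x,p)$ (a consequence of (H1)), from (H2), and from the degenerate ellipticity $A\ge 0$, via a doubling-of-variables argument in which $A\ge 0$ (together with $A\in C^2$, so $\sqrt A$ is Lipschitz) keeps the second-order contributions harmless. Granting this, $W$ itself is Lipschitz, so $W^*=W_*=W\in C(\T^n)$ is the desired solution.

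For uniqueness when $c_H<0$, let $u_1,u_2$ be two solutions of (EO). Now $\tilde v=v-C$ is a \emph{strict} subsolution, $-\tr(AD^2\tilde v)+H(x,D\tilde v)=c_H<0$, and for $\mu\in(0,1)$ the convexity of $H$ in $p$ (from (H1)) gives that $w_\mu:=\mu u_1+(1-\mu)\tilde v$ satisfies $-\tr(AD^2w_\mu)+H(x,Dw_\mu)\le(1-\mu)c_H<0$ and $w_\mu\le\psi$ in the viscosity sense. I would then show $w_\mu\le u_2$: if $\max_{\T^n}(w_\mu-u_2)>0$ were attained at some $x_0$, then $u_2(x_0)=\psi(x_0)$ is impossible (it would force $w_\mu(x_0)\le\psi(x_0)=u_2(x_0)$), so the supersolution property of $u_2$ yields $-\tr(A(x_0)D^2u_2)+H(x_0,Du_2)\ge 0$; doubling variables and invoking Ishii's lemma together with $A\ge 0$, (H2), and the Lipschitz bound above (which keeps the penalization slopes bounded) leads to $0\le(1-\mu)c_H<0$, a contradiction. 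Hence $w_\mu\le u_2$ for all $\mu\in(0,1)$, and letting $\mu\to 1$ gives $u_1\le u_2$; by symmetry $u_1=u_2$.

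The step I expect to be the main obstacle is the a priori Lipschitz estimate for subsolutions of the degenerate viscous equation --- and, via it, the continuity of the Perron function --- since without it Perron's method only delivers (possibly discontinuous) envelopes; once that estimate is in place, degenerate ellipticity makes the doubling arguments in both the existence and comparison parts routine. It is worth noting that the strictness $c_H<0$ is used precisely to produce the gap $(1-\mu)c_H$ that closes the comparison; this gap vanishes when $c_H=0$, in agreement with the fact that uniqueness is not asserted in that case.
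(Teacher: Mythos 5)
Your proposal is correct and follows essentially the same route as the paper: $v-C$ and $\psi$ as ordered sub/supersolution plus Perron for existence, and the convex combination $\mu u_1+(1-\mu)(v-C)$ exploiting the strict gap $(1-\mu)c_H<0$ for uniqueness when $c_H<0$. The only cosmetic difference is in the uniqueness comparison: the paper manufactures strict monotonicity by adding a small discount term $\alpha\tilde V_1\leq\alpha V_2+\cdots$ and invoking the comparison principle for the discounted operator, while you argue directly at a maximum point of $w_\mu-u_2$ via doubling; the two mechanisms are interchangeable. You also spell out two details the paper leaves implicit (the trivial ``only if'' direction from \eqref{c_H} with $a=0$, and the a priori Lipschitz bound for subsolutions that makes the Perron envelope continuous without appealing to a comparison principle for (EO), which indeed may fail when $c_H=0$); both are sound and arguably worth recording.
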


Based on these existence results on the ergodic problems we establish the main result. 
\begin{thm}[Main Theorem]\label{thm:main}
Let $u$ be the solution of {\rm(C)} and $c_H$ be the constant given by \eqref{c_H}. 
The followings hold:\\
{\rm(i)} 
If $c_H>0$, then 
$u+c_Ht$ converges uniformly to a solution $v$ of {\rm(E)} on $\T^n$.\\
{\rm(ii)} 
If $c_H\leq 0$, then $u$ converges uniformly to a solution $V$ of {\rm (EO)} on $\T^n$.
\end{thm}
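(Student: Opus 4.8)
\medskip
\noindent\textbf{Proof strategy.}
The plan is to split according to the sign of the ergodic constant $c_H$ and to reduce each regime to a version of the convergence \eqref{CGMT-g} of \cite{CGMT}; throughout, I use that, under (H1)--(H4) and by standard barrier and Bernstein-type arguments, the solution $u$ of (C) is Lipschitz in $x$ uniformly in $t$, with $u_t$ bounded. \emph{Case (i): $c_H>0$.} Here the obstacle becomes inactive for large time. Let $w$ solve the obstacle-free Cauchy problem \eqref{eq:HJ} with $w(\cdot,0)=u_0$; the equivalent formulation of (C) exhibits $u$ as a viscosity subsolution of \eqref{eq:HJ} with the same initial datum, so comparison for \eqref{eq:HJ} gives $u\le w$ on $\Q$. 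By \cite{CGMT}, $w(\cdot,t)+c_Ht\to v$ uniformly for a solution $v$ of (E), and since $c_H>0$ this forces $w(\cdot,t)\to-\infty$ uniformly; hence there is $T_1>0$ with $u(\cdot,t)\le w(\cdot,t)<\min_{\T^n}\psi$ on $\T^n\times[T_1,\infty)$. The obstacle constraint being strict there, the complementarity structure of (C) shows that $u$ coincides on $\T^n\times(T_1,\infty)$ with the solution of \eqref{eq:HJ} issued from the Lipschitz datum $u(\cdot,T_1)$. Applying \eqref{CGMT-g} to that problem and shifting the time origin back to $0$ yields $u(\cdot,t)+c_Ht\to v$ uniformly for a solution $v$ of (E).

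\medskip
\noindent\emph{Case (ii): $c_H\le0$.}
By Proposition~\ref{prop:EO} there is a solution $V$ of (EO), which, viewed as time-independent, is a stationary solution of (C); since for $\mu\ge0$ the functions $V+\mu$ and $V-\mu$ are, respectively, a super- and a subsolution of (C), comparison gives that $t\mapsto\|u(\cdot,t)-V\|_\infty$ is non-increasing (the contraction property), so in particular $u$ is bounded uniformly in $t$. The crux is to prove
\[
u_t\longrightarrow0\qquad\text{as }t\to\infty\ \text{in the viscosity sense.}
\]
Granting this, fix any $t_k\to\infty$; by Arzela--Ascoli a subsequence of $(x,t)\mapsto u(x,t_k+t)$ converges locally uniformly on $\T^n\times\R$ to some $\bar u$, which by stability of viscosity solutions solves (C) on $\T^n\times\R$ and, by the displayed convergence, satisfies $\bar u_t=0$. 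Hence $\bar u(x,t)=V_\ast(x)$ for a solution $V_\ast$ of (EO), and $u(\cdot,t_k)\to V_\ast$ uniformly. The contraction property applied to the stationary solution $V_\ast$ then shows that $\|u(\cdot,t)-V_\ast\|_\infty$ is non-increasing in $t$ and tends to $0$ along $\{t_k\}$, hence tends to $0$, so that $u(\cdot,t)\to V_\ast$ uniformly on $\T^n$. When $c_H<0$, $V_\ast$ must be the unique $V$ of Proposition~\ref{prop:EO}; in that sub-case one can alternatively bypass the adjoint method, since $v-K$ (with $v$ solving (E) and $K$ large) is a strict subsolution of (EO), which yields a comparison principle for (EO) and thereby identifies the two half-relaxed limits of $u$ as $t\to\infty$.

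\medskip
\noindent\emph{The main obstacle: proving $u_t\to0$.}
This is the step where the nonlinear adjoint method of \cite{CGMT} has to be adapted to the obstacle, and I expect it to be the hardest part. The plan is to regularize (C) by a vanishing viscosity together with a penalization of $\psi$: consider the smooth solution $u^\ep$ of
\[
u^\ep_t-\tr\big((A+\ep I_n)D^2u^\ep\big)+H(x,Du^\ep)+\beta_\ep(u^\ep-\psi)=0\ \text{ in }\Q,\qquad u^\ep(\cdot,0)=u_0^\ep,
\]
where $\beta_\ep$ is a standard smooth penalization ($\beta_\ep\ge0$, $\beta_\ep'\ge0$, $\beta_\ep\equiv0$ on $(-\infty,0]$, $\beta_\ep\to+\infty$ on $(0,\infty)$) and $u_0^\ep$ a mollification of $u_0$; one checks $u^\ep\to u$ locally uniformly. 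For a fixed $(x_0,t_0)$ one introduces the adjoint density $\sig^\ep\ge0$ solving the backward linearized problem
\[
-\sig^\ep_t-\sum_{i,j}\pl_{ij}\big((a^{ij}+\ep\delta^{ij})\sig^\ep\big)-\Div\big(\sig^\ep D_pH(x,Du^\ep)\big)+\beta_\ep'(u^\ep-\psi)\,\sig^\ep=0\ \text{ in }\T^n\times(0,t_0),
\]
with $\sig^\ep(\cdot,t_0)=\delta_{x_0}$ (mollified); the extra nonnegative term $\beta_\ep'(u^\ep-\psi)\sig^\ep$ only depletes the mass, so $\int_{\T^n}\sig^\ep(\cdot,t)\,dx\le1$. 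Differentiating the equation for $u^\ep$ in $t$, testing against $\sig^\ep$, and using the convexity bound $D^2_{pp}H\ge2\theta I_n$ of (H1), one derives conservation-of-energy identities and dissipation estimates, uniform in $\ep$ and in $t_0$, controlling $\int_0^{t_0}\!\!\int_{\T^n}|Du^\ep_t|^2\sig^\ep\,dx\,dt$ together with the new penalization contribution $\int_0^{t_0}\!\!\int_{\T^n}\beta_\ep'(u^\ep-\psi)|u^\ep_t|^2\sig^\ep\,dx\,dt$; these force $u^\ep_t$ to become nearly constant in $x$ and to tend to $0$ for $t$ large, and one then passes to the limit $\ep\to0$. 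The genuinely new difficulties compared with \cite{CGMT} are the treatment of the stiff penalization term inside the energy identities and the analysis of the limiting adjoint measure, which is now supported in part on the contact set $\{V=\psi\}$; this is where the main effort lies.
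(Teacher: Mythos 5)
Your case (i) matches the paper's argument essentially verbatim: one shows via a barrier that $u(\cdot,t)<\psi$ for $t$ large (the paper uses that $v-C-c_Ht$ is a solution of the unconstrained equation, you use the unconstrained solution $w$ with $u\le w$ — either works), and then invokes \cite{CGMT} on the obstacle-free equation restarted at a large time. That part is fine.

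For case (ii) your top-level architecture (a key stability statement of the form $u_t\to0$, then Arzel\`a--Ascoli plus stability of viscosity solutions to produce a stationary limit solving (EO), then the contraction property $\|u(\cdot,t)-V\|_\infty$ non-increasing to upgrade subsequential to full convergence) is equivalent to the paper's, once one observes that the paper's rescaling $u^\ep(x,t)=u(x,t/\ep)$ is simply a device to express the same assertion as $\ep\|w^\ep_t(\cdot,1)\|_\infty\to0$ at the fixed time $t=1$. The closing contraction argument you give is exactly the paper's.

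The gap is in the sketch of the key lemma, which is where all the work lies. You propose to ``differentiate the equation for $u^\ep$ in $t$, test against $\sig^\ep$, and use the convexity bound $D^2_{pp}H\ge2\theta I_n$'' to control $\iint|Du^\ep_t|^2\sig^\ep$. This cannot work as stated: differentiating in $t$ \emph{linearizes} $H$, producing the term $D_pH(x,Du^\ep)\cdot Du^\ep_t$, which is linear in $Du^\ep_t$ and leaves no room to invoke convexity of $H$ in $p$; what you actually get from that computation is only the conservation law $\tfrac{d}{dt}\int u^\ep_t\,\sig^\ep\,dx=0$ (the paper's Lemma~\ref{lem-1}), not a dissipation estimate. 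The ingredient you are missing is the comparison with a solution $V^\ep$ of the \emph{approximated ergodic problem} ${\rm (EO)}_\ep$. Subtracting ${\rm (A)}_\ep$ from ${\rm (EO)}_\ep$ and multiplying by $\sig^\ep$, the convexity of $H$ produces the quadratic dissipation $\theta\iint|D(V^\ep-w^\ep)|^2\sig^\ep\le C\ep$, and the convexity of the penalty $\gam$ produces the companion estimate on $|(V^\ep-\psi)_+-(w^\ep-\psi)_+|^2$; these, fed back into the conservation law, give the decay of $\ep w^\ep_t$. You never introduce ${\rm (EO)}_\ep$, so the mechanism that makes the argument close is absent. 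Separately, your ``standard penalization $\beta_\ep$'' and viscosity $\ep I_n$ on a single parameter are underspecified: the paper must tune the penalization $\gam^\del(r)=r_+^2/(2\del^{1/2})$ against the viscosity $\del^2$ and then set $\del=\ep^2$, precisely so that the dangerous factor $(\gam^\ep)'\sim r_+/\ep$, which appears when estimating $(\gam^\ep)'(V^\ep-\psi)-(\gam^\ep)'(w^\ep-\psi)$ against $DW^\ep$ in the Hessian estimate, is exactly absorbed by the two $O(\sqrt\ep)$ $L^2(\sig^\ep)$ bounds just described; this balancing is highlighted by the authors as crucial and does not come for free from a generic $\beta_\ep$.
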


Our analysis  to prove Theorem \ref{thm:main} follows the method which was introduced in \cite{CGMT}. 
We rescale (C) in an appropriate way, and introduce an approximation to the rescaled problem by adding a viscosity term and 
a penalized term corresponding to the obstacle. 
By using a long time averaging effect, which have first been found in \cite{CGMT}, 
we prove the key stability result  similar to \eqref{key:stability} and achieve the desired result. 
Notice that the penalized term needs to be handled carefully as it does not appear in the usual setting of \cite{CGMT}.

There are two main difficulties on the penalization in this paper. The first part is on the derivation of some a priori bounds for the penalization and the viscosity solution.
This is a rather classical difficulty, and it could be achieved in light of the maximum principle.
The second part is more subtle as one needs to derive better estimates of the penalization
and its derivative on the support 
of the solution of the adjoint equation, which is the central point of the approach in \cite{CGMT}.
Note that the derivative term is much more dangerous compare to the penalization itself.
We need to choose carefully the different scales of the viscosity term and the penalized term in the approximation 
and obtain some new estimates in this context. See Lemmas \ref{lem:key1} and \ref{lem:key2} for details.

\medskip
The paper is organized as follows: Section 2 devotes to the proofs of  Propositions \ref{prop:e.CGMT}, 
\ref{prop:EO}. 
In Section 3 we introduce the approximation procedure,
derive the new identities and key estimates in Lemmas \ref{lem-1}, \ref{lem:key1} and \ref{lem:key2}, and prove the main theorem, Theorem \ref{thm:main} (ii).


\section{Preliminaries}
We first give the proof of Proposition \ref{prop:e.CGMT}  
for the self-containedness. 
Throughout the paper, we will mostly use Einstein's convention of summation for simplicity except in some cases where the summations are stated clearly in order to avoid any confusion.

\begin{proof}[Proof of Proposition {\rm\ref{prop:e.CGMT}}]
The proof is based on the standard Bernstein method. 
For $\al>0,\del>0$, consider the equation
\[
\al v^{\al,\del}-\tr\big(A(x)D^2v^{\al,\del}\big)+H(x,Dv^{\al,\del})=\del\Del v^{\al,\del} \quad \text{in} \ \T^n. 
\]
Owing to the discount and viscosity terms, there exists a (unique) classical solution $v^{\al,\del}$. 
We easily see that $|\al v^{\al,\del}|\le C$ for $C>0$ by the comparison principle. 
Set $\varphi:=|Dv^{\al,\del}|^2/2$ and then $\varphi$ satisfies 
\[
2\al\varphi-a^{ij}_{x_k}v^{\al,\del}_{x_ix_j}v^{\al,\del}_{x_k}
-a^{ij}(\varphi_{x_ix_j}-v^{\al,\del}_{x_ix_k}v^{\al,\del}_{x_jx_k})
+D_{x}H\cdot Dv^{\al,\del}+D_{p}H\cdot D\varphi=\del(\Del\varphi-|D^2v^{\al,\del}|^2). 
\]

Take a point $x_0$ such that $\varphi(x_0)=\max_{\T^n}\varphi\geq 0$ and note that
at that point
\begin{equation}\label{Bern-0}
-a^{ij}_{x_k}v^{\al,\del}_{x_ix_j}v^{\al,\del}_{x_k}
+D_{x}H\cdot Dv^{\al,\del}+a^{ij}v^{\al,\del}_{x_i x_k}v^{\al,\del}_{x_j x_k}+\del|D^2v^{\al,\del}|^2 \leq 0.
\end{equation}
The two terms $a^{ij}v^{\al,\del}_{x_i x_k}v^{\al,\del}_{x_j x_k}$ and $\del |D^2 v^{\al,\del}|^2$ are the good terms, which will help us to control other terms and to deduce the result.

We first use the trace inequality (see \cite[Lemma 3.2.3]{SV} for instance),
\begin{equation}\label{SV-a2}
(\tr(A_{x_k} S))^2 \leq C \tr(SAS) \quad  
\text{for all} \ S \in \M^{n\times n}_{\text{sym}}, \ 1 \leq k \leq n, 
\end{equation}
for some constant $C$ depending only on $n$ and $\|D^2 A\|_{L^\infty(\T^n)}$
to yield that, for some small constant $c>0$,
\begin{align}
&a^{ij}_{x_k}v^{\al,\del}_{x_ix_j}v^{\al,\del}_{x_k}
=\tr(A_{x_k} D^2v^{\al,\del})v^{\al,\del}_{x_k}
\le c \big(\tr(A_{x_k} D^2v^{\al,\del})\big)^{2}+ \frac{C}{c}|Dv^{\al,\del}|^2\notag\\
\leq&\, 
\frac{1}{2}\tr(D^2v^{\al,\del}A D^2v^{\al,\del})+ C|Dv^{\al,\del}|^2
=\frac{1}{2}a^{ij}v^{\al,\del}_{x_i x_k}v^{\al,\del}_{x_j x_k}+C|Dv^{\al,\del}|^2.\label{Bern-1}
\end{align}

Next, since $A$ is a symmetric positive definite matrix, it  can be diagonalized as $A=P^TDP$ where $D$ is the diagonal matrix, which could be written as $D=\text{diag}\{d^1,\ldots,d^n\}$ with $d^i \geq 0$, 
and $P^TP=I_n$. We have
\begin{align}
&\left(a^{ij} v^{\al,\del}_{x_i x_j} \right)^2=\left(p^{mi} p^{mj} d^m v^{\al,\del}_{x_i x_j}\right)^2\notag\\
\leq\,&
\left(\sum_j C\left|p^{mi}d^mv^{\al,\del}_{x_i x_j}\right| \right)^2
\leq
C \sum_{j,m} \left|\sum_i p^{mi} \sqrt{d^m} v^{\al,\del}_{x_i x_j}\right|^2\notag\\
=\,&
C\sum_{j,m} p^{mi} \sqrt{d^m}v^{\al,\del}_{x_i x_j} p^{mk}\sqrt{d^m}v^{\al,\del}_{x_kx_j}
=
Cp^{mi}p^{mk}d^mv^{\al,\del}_{x_i x_j} v^{\al,\del}_{x_k x_j}
=
C a^{ik} v^{\al,\del}_{x_i x_j} v^{\al,\del}_{x_k x_j}. \label{Bern-2}
\end{align}
In light of \eqref{Bern-2}, for some $c>0$ sufficiently small,
\begin{align}
\frac{1}{2}a^{ij}v^{\al,\del}_{x_i x_k}v^{\al,\del}_{x_j x_k}+\del|D^2 v^{\al,\del}|^2
&\geq 
4c\left(\left(a^{ij}v^{\al,\del}_{x_ix_j}\right)^2+(\del \Del v^{\al,\del})^2 \right)\notag\\
&\geq 
2c \left(a^{ij}v^{\al,\del}_{x_i x_j}+\del \Del v^{\al,\del}\right)^2
=
2c \left (\al v^{\al,\del}+H(x,Dv^{\al,\del})\right)^2 \notag\\
&\geq
c H(x,Dv^{\al,\del})^2-C.  \label{Bern-3}
\end{align}
Combining \eqref{Bern-0}, \eqref{Bern-1}, and \eqref{Bern-3} to achieve that
$$
D_xH\cdot Dv^{\al,\del}-C|Dv^{\al,\del}|^2+c H(x,Dv^{\al,\del})^2 \leq C.
$$
We then use (H1) and (H2) in the above to get the existence of a constant $C>0$ independent of $\al,\del$ so that $|Dv^{\al,\del}(x_0)| \leq C$.
Therefore, setting $w^{\al,\del}(x):=v^{\al,\del}(x)-v^{\al,\del}(0)$, 
by passing some subsequences if necessary, we can send $\del$ and $\al$ to $0$ in this order to yield that $w^{\al,\del}$ and $\al v^{\al,\del}$, respectively, uniformly converge $v$ and $-c_H$ which satisfies (E) in the viscosity sense. 

We write $c$ for the right hand side of \eqref{c_H}. By the definition, we have $c\le c_H$. 
Suppose that $c<c_H$. We may choose a subsolution $(w,c)$ and a solution $(v,c_H)$ of (E), respectively, such that $w>v\ge0$ on $\T^n$. 
Then there exists a small $\al>0$ such that 
\[
\al w-\tr\big(A(x)D^2w\big)+H(x,Dw)\le 
\al v-\tr\big(A(x)D^2v\big)+H(x,Dw) \quad \text{in} \ \T^n.  
\]
By the comparison principle, $w\le v$ on $\T^n$, which gives a contradiction. 
The uniqueness of the ergodic constant is also obtained by the same argument. 
\end{proof}

We provide a priori bound for the solution of (C) 
by using Proposition \ref{prop:e.CGMT}. 
\begin{prop}\label{prop:bound}
Let $v\in C(\T^n)$ be a solution of {\rm (E)}.
There exists a universal constant $C>0$ so that the following boundedness results hold:
\\
{\rm(i)} 
If $c_H>0$, then 
$$
v-C \leq u+c_Ht \leq v+ C \qquad \text{in}\ \cQ.
$$
{\rm(ii)} 
If $c_H\leq 0$, then
$$
v-C \leq u \leq \psi \qquad \text{in}\ \cQ.
$$
\end{prop}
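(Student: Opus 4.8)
The plan is to produce explicit sub- and supersolutions of (C) built from the ergodic data, and then invoke the comparison principle for the obstacle problem (which holds under (H1)--(H4) because (C) is a uniformly parabolic-in-the-degenerate-sense first-order-dominated problem with a bounded continuous obstacle). For the upper bound in case (ii) the estimate $u\le\psi$ is immediate: the constraint $u-\psi\le0$ is built into the equation $\max\{\cdots,u-\psi\}=0$, so any viscosity solution satisfies it pointwise in $\overline{\Q}$; the initial condition $u_0\le\psi$ from (H4) is consistent with this. So the real content is the two-sided comparison with $v\pm C$ (resp.\ $v-C\le u$), and the choice of the constant $C$.

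First I would fix a solution $(v,c_H)$ of (E); by (H1)--(H3) and the Bernstein estimate already established, $v\in\Lip(\T^n)$, so $v$ is bounded and $\psi-v$ is bounded. Set $C_0:=\|u_0-v\|_{L^\infty(\T^n)}+\|\psi-v\|_{L^\infty(\T^n)}$, which is finite. In case (i), $c_H>0$: I claim $w^\pm(x,t):=v(x)-c_Ht\pm C_0$ are a super/subsolution of (C). Indeed $v-c_Ht$ solves \eqref{eq:HJ} exactly (in the viscosity sense), hence $w^+$ is a supersolution of $u_t-\tr(AD^2u)+H(x,Du)=0$; moreover $w^+-\psi = v-c_Ht+C_0-\psi$, and since $c_H>0$ this is $\le C_0+\|v-\psi\|_\infty - c_Ht$, which may be positive for small $t$, so $w^+$ is genuinely a supersolution of the full $\max$ only once $t$ is large — a subtlety. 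The clean fix is to note that $\max\{w^+_t-\tr(AD^2w^+)+H(x,Dw^+),\,w^+-\psi\}\ge\max\{0,\,w^+-\psi\}\ge0$, so $w^+$ \emph{is} a supersolution of (C) for all $t\ge0$, with $w^+(x,0)=v+C_0\ge u_0$; comparison gives $u\le v-c_Ht+C_0$. For the lower bound, $w^-=v-c_Ht-C_0$ solves \eqref{eq:HJ} and satisfies $w^--\psi\le 0$ automatically since $w^-\le v - C_0 \le \psi$ by the choice of $C_0$ and $c_H t\ge 0$; hence $w^-$ solves (C) as a subsolution, $w^-(x,0)=v-C_0\le u_0$, and comparison yields $v-c_Ht-C_0\le u$. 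Rearranging gives (i) with $C=C_0$.

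In case (ii), $c_H\le0$: the upper bound $u\le\psi$ is free as noted. For the lower bound take $\underline w(x):=v(x)-C_0$; since $c_H\le0$, $v$ is itself a supersolution of $-\tr(AD^2v)+H(x,Dv)=c_H\le0$, so the time-independent function $\underline w$ satisfies $\underline w_t-\tr(AD^2\underline w)+H(x,D\underline w)=H(x,Dv)-\tr(AD^2v)=c_H\le0$ (interpreted in the viscosity sense), and $\underline w-\psi=v-C_0-\psi\le0$ by the choice of $C_0$; thus $\underline w$ is a subsolution of (C) with $\underline w(x,0)=v-C_0\le u_0$. Comparison gives $v-C_0\le u$ in $\overline{\Q}$, which is (ii).

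The main obstacle I anticipate is not any of these constructions individually but rather pinning down the comparison principle for (C): one needs that a bounded u.s.c.\ subsolution lies below a bounded l.s.c.\ supersolution on $\overline{\Q}$, for a degenerate parabolic obstacle problem. This is standard (the obstacle enters monotonically, and the doubling-of-variables argument of Crandall--Ishii--Lions goes through since $A\in C^2$ gives the required structure condition $|\tr(A(x)X)-\tr(A(y)Y)|\le\omega(|x-y|^2/\varepsilon + |x-y|)$), but it must be stated and either proved or cited carefully, since the whole proposition rests on it. A secondary point is the viscosity-sense verification that $v-c_Ht\pm C_0$ and $v-C_0$ are sub/supersolutions of the $\max$-equation: one should check that the inequality $\max\{F,G\}\ge0$ follows from $G\ge0$ (for the supersolution side) and from $F\le0$ together with $G\le0$ (for the subsolution side, using that $\max\{F,G\}\le0$), which is elementary but worth writing out once.
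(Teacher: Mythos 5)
Your proof matches the paper's argument: both construct the same barriers from the ergodic solution $v$ (namely $v\pm C-c_Ht$ in case (i) and $v-C$ in case (ii)) and then apply the comparison principle for (C). The only cosmetic difference is that your constant $C_0$ adds $\|\psi-v\|_{L^\infty}$, which is redundant because (H4) gives $u_0\le\psi$ and hence $v-\|u_0-v\|_{L^\infty}\le u_0\le\psi$ already.
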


\begin{proof}
We  choose $C>0$ sufficiently large so that $|u_0-v|\le C$.
Note that $v+C-c_Ht$ is a supersolution of (C), 
$v-C-c_Ht$ is a solution of \eqref{eq:HJ}, and 
\[
v-C-c_Ht \leq v-C\leq u_0 \leq \psi \quad \text{on} \ \cQ.
\]
Thus, by the comparison principle for (C), we get the result of (i).

If $c_H \leq 0$ then $v-C$ is a subsolution of (C).
Thus, in light of the comparison principle, (ii) holds.
\end{proof}

As stated in Introduction heuristically,  
the solution of (C) is not influenced by the obstacle $\psi$ for a large time, if 
$c_H > 0$. 
Indeed, 
we take $T_0>0$ sufficiently large so that, in light of (i) of Proposition \ref{prop:bound},
$$
u(\cdot,t) < \psi \quad \text{on} \ \T^n \ \text{for all} \ t \geq T_0.
$$
In particular, $u$ does not touch the obstacle $\psi$ for all $t \geq T_0$, and it solves the usual Hamilton--Jacobi equation
$$
u_t-\tr\big(A(x)D^2u\big)+H(x,Du)=0 \quad \text{in} \ \T^n \times [T_0,\infty).
$$
We can thus apply the large time behavior result \cite[Theorem 1.1]{CGMT} to deduce the result of Theorem \ref{thm:main} (i). 

\begin{rem}
We can observe this phenomenon from the control viewpoint. 
We consider the following stochastic optimal control problem: 
\begin{align}
&\text{Minimize}&  &
\E_{x}\Big[\int_{0}^{\theta}
L(X^{\xi}(s),\xi(s))\,ds
+h(X^{\xi}(\theta),t-\theta)
\Big], 
\nonumber\\
&\text{subject to} & &
dX^{\xi}(s)=-\xi(s)\,ds+\sig(X^{\xi}(s))dW(s), \
X^{\xi}(0)=x,  \nonumber
\end{align}
over all controls $\xi\in L^{1}([0,t])$ and $\theta\ge0$ 
for all $(x,t)\in\cQ$, where 
\[h(x,s):=\psi(x) \ \text{for} \ s>0, \ \text{and} \ 
h(x,0):=u_{0}(x).  
\]
The matrix  $\sig(x)\in\M^{m\times n}$ is the square root of $A(x)$, i.e., $A(x)=\sig(x)\sig^{T}(x)$, 
$W$ is the $n$-dimensional Brownian motion, 
and $L:\T^n\times\R^n\to\R$ is the corresponding Lagrangian, i.e., $L(x,q):=\sup_{p\in\R^n}\{p\cdot q-H(x,p)\}$.  
We here choose the control $\xi$ and also the \textit{stopping time} $\theta$ 
and therefore, this problem is called the \textit{optimal stopping} problem 
in the context of control problem. 
It is well-known that the \textit{value function} associated with this is the unique viscosity solution of (C). See \cite{B,BCD} for details. 
Let $u$ be the value function for the optimal stopping problem,  then  
\begin{align*}
u(x,t)\le&\, 
\E_{x}\Big[\int_{0}^{t}
L(X^{\xi}(s),\xi(s))\,ds
+u_0(X^{\xi}(t))
\Big] 
\end{align*}
for all admissible control $\xi$. 
Therefore, if we take the infimum on $\xi$ in the right hand side and denote it by $U$, then 
we get $u(x,t)\le U(x,t)$ on $\cQ$.
Notice that $U$ is the solution of the initial-value  
problem for the degenerate viscous Hamilton--Jacobi equations, whose
large-time behavior has been studied in \cite{CGMT}. 
By \cite[Theorem 1.1]{CGMT}, we have $U(x,t)+c_Ht$ converges a solution of 
the ergodic problem (E). 
Thus, if $c_H>0$, then we see $u(x,t)\to-\infty$ as $t\to \infty$. 
In particular, $u(x,t)<\psi(x)$ for all $(x,t)\in\T^n\times[T_0,\infty)$ with a sufficient large constant
$T_0>0$.  
\end{rem}

Therefore, we only need to consider the large-time behavior of solutions to (C) in the case where $c_H\le0$, which is studied in the next section.  
We give the proof of Proposition \ref{prop:EO} at the end of this section. 

\begin{proof}[Proof of Proposition {\rm\ref{prop:EO}}]
Take $v$ to be a solution of (E) and take $C>\|v\|_{L^\infty(\T^n)}+\|\psi\|_{L^\infty(\T^n)}+|c_H|$, then $v-C \leq \psi$. We have that $v-C$ and $\psi$ are a subsolution and a supersolution of (EO) respectively. By using the Perron method, we achieve the existence of a solution $V\in C(\T^n)$ of (EO).

Now we prove that $V$ is unique in case $c_H<0$. Take $V_1, V_2$ to be two solutions of (EO). Notice that
$$
\max\{-\tr\big(A(x)D^2(v-C)\big)+H(x,D(v-C)), v-C-\psi\} \leq c_H <0.
$$
For any $\lam\in (0,1)$, we use the convexity of $H$ in $p$ to deduce that 
$\tilde V_1=\lam V_1+(1-\lam)(v-C)$ is a subsolution of
$$
\max\{-\tr\big(A(x)D^2\tilde V_1 \big)+H(x,D\tilde V_1), \tilde V_1-\psi\} \leq (1-\lam)c_H <0.
$$
By the continuity of $\tilde V_1, V_2$ in $\T^n$, we achieve the existence of a small constant $\al>0$ so that
\begin{multline}\label{uni-c}
\al \tilde V_1+\max\{-\tr\big(A(x)D^2\tilde V_1 \big)+H(x,D\tilde V_1), \tilde V_1-\psi\}\\
\leq \al  V_2+\max\{-\tr\big(A(x)D^2 V_2 \big)+H(x,D V_2),  V_2-\psi\}
\end{multline}
The usual comparison principle yields that $\tilde V_1 \leq V_2$. Let $\lam \to 1$ to deduce further that $V_1 \leq V_2$. By the same argument, $V_2 \leq V_1$. 
The proof is complete.
\end{proof}

\section{Large-Time Behavior in case $c_H\leq 0$} 
In this section, we \textit{always} assume $c_H\le0$. 
In what follows, we follow the method which was introduced in \cite{CGMT}. 
In \cite{CGMT}, we take notice of the two new perspectives:  
\begin{enumerate}
\item
the conservation of energy, and
the long time averaging effects on the Hamiltonian and the diffusion
terms in the context of the nonlinear adjoint method,  
\item
estimates on the difference of gradient and Hessian of solutions of
the approximate equation and ergodic problem
on the support of the solution of the adjoint equation.
Let us emphasize that the estimates on the integral of the support of the 
solution of the adjoint equation are much better and stronger than the usual
known estimates on the whole torus. 
  
\end{enumerate}
As described in Introduction, if we consider the case $c_H\le0$, the obstacle 
influences the long time behavior of solutions, and therefore we need to 
consider the penalization terms for the approximations of (C) and (EO).

\subsection{Approximation} 
For each $\ep>0$, we introduce the rescaling function $u^\ep$ of $u$ as  $u^\ep(x,t)=u(x,t/\ep)$ for $(x,t)\in \Q$. It is clear that $u^\ep$ satisfies
$$
{\rm (C)_\ep} \quad
\max\{\ep u^\ep_t-\tr\big(A(x)D^2u^\ep\big)+H(x,Du^\ep), u^\ep-\psi\}=0 \quad \text{in} \ \Q.
$$

We introduce the approximation, for each $\del>0$,
\begin{equation}\notag
{\rm(A)_{\ep}^{\delta}}\quad
\begin{cases}
\ep w^{\ep, \delta}_t -\tr\big(A(x)D^2w^{\ep,\delta}\big)+ H(x,Dw^{\ep, \delta}) 
+\gam^{\del}(w^{\ep,\delta}-\psi)=\del^{2}\Del w^{\ep,\del}\ 
& \text{ in }\Q,  \\
w^{\ep, \del}(x,0)=u_{0}(x)  & \text{ on } \T^{n}
\end{cases}
\end{equation} 
 where $\gam^\del$ is the penalized function defined as $\gam^{\del}(r):=\gam(\del^{-1/4}r)$, where 
the function $\gam:\R\to[0,\infty)$ is defined by 
\[
\gam(r):=0 \quad \text{for} \ r\in(-\infty,0], \quad
\gam(r):=\frac{r^2}{2} \quad \text{for} \ r\in(0,\infty). 
\]
One could hence write that
$$
\gam^\del(r)=\frac{r_+^2}{2 \del^{1/2}},
$$
where $r_+=\max\{r,0\}$. 
Note that the scales we choose here are crucial in our analysis, 
which will be pointed out clearly later.

\begin{prop}\label{prop:est1} 
There exist a unique smooth solution $w^{\ep,\del}$ of ${\rm (A)}_\ep^\del$ 
and a universal constant $C>0$ independent of $\ep,\del$ such that 
\begin{itemize}
\item[(i)]
$\gam^{\del}\big( (w^{\ep,\del}-\psi)(x,t)\big)\le C \quad \text{for all} \ (x,t)\in\cQ$, 
\item[(ii)]
$\|w^{\ep,\del}\|_{\Li(\Q)}+\|Dw^{\ep,\del}\|_{\Li(\Q)}\le C$.
 \end{itemize}
\end{prop}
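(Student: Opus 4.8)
The plan is to establish existence and uniqueness of the smooth solution $w^{\ep,\del}$ by standard quasilinear parabolic theory, and then to derive the two a priori bounds via the maximum principle, handling the penalized term $\gam^\del$ carefully because of the $\del$-dependent scaling. For existence and uniqueness, I would observe that ${\rm (A)}_\ep^\del$ is a uniformly parabolic equation (the viscosity term $\del^2\Del$ plus $A(x)\ge 0$ gives ellipticity bounded below by $\del^2 I_n$), with a $C^1$ nonlinearity in the gradient and a Lipschitz-in-$r$ zeroth order term $\gam^\del(r-\psi)$ for each fixed $\del$ (note $\gam^\del$ is $C^1$ with locally bounded derivative); together with $u_0\in\Lip(\T^n)$ and the periodicity in $x$, classical results (e.g. Ladyzhenskaya--Solonnikov--Uraltseva, or the approach in \cite{CGMT}) give a unique smooth solution on $\Q$ once one has an a priori $L^\infty$ and gradient bound, which is exactly what (i) and (ii) provide; so the real content is in the estimates.

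For (i), the idea is a maximum principle argument on $w^{\ep,\del}-\psi$. Since $\psi\in C^2(\T^n)$, plugging $\psi$ into the operator produces only bounded terms: $-\tr(A(x)D^2\psi)+H(x,D\psi)-\del^2\Del\psi$ is bounded by a constant $C_0$ independent of $\ep,\del$ (for $\del\le 1$). Thus $\psi + C_0 t/\ep$ — or, better, a time-independent comparison using that $\gam^\del$ grows — would be compared against $w^{\ep,\del}$; more precisely, at a point where $w^{\ep,\del}-\psi$ attains a large positive interior maximum in space, the second-order and first-order terms in $D_x$ of $(w^{\ep,\del}-\psi)$ vanish/have a favorable sign, and one is left with $\ep(w^{\ep,\del}-\psi)_t + \gam^\del(w^{\ep,\del}-\psi) \le C$; since $\gam^\del(r) = r_+^2/(2\del^{1/2})$ is coercive, this forces $\gam^\del(w^{\ep,\del}-\psi)\le C$ along the flow of the maximum, and an ODE/Gronwall-type argument on $t\mapsto \max_x (w^{\ep,\del}-\psi)(\cdot,t)$ (using $u_0\le\psi$ at $t=0$ from (H4)) closes it. One has to be slightly careful to run this rigorously — e.g. via the standard trick of adding $\eta/(\text{something})$ or working with a regularized max — but this is routine.

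For (ii), the $L^\infty$ bound on $w^{\ep,\del}$ follows by comparing with $v-C$ and $\psi+C$ in the spirit of Proposition \ref{prop:bound}: $\psi+C$ is a supersolution of ${\rm (A)}_\ep^\del$ for $C$ large (the penalty vanishes there and the extra $\del^2\Del$ term is harmless), and $v-C$ with $v$ a solution of (E) is a subsolution (again the penalty vanishes since $v-C\le\psi$, and $-\del^2\Del(v-C)$ contributes a bounded error absorbed into $C$), so $v-C\le w^{\ep,\del}\le\psi+C$ uniformly. The gradient bound is the main obstacle and will use the Bernstein method exactly as in the proof of Proposition \ref{prop:e.CGMT}: set $\varphi=|Dw^{\ep,\del}|^2/2$, differentiate the equation, and evaluate at a spatial maximum of $\varphi$. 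The new feature compared to \cite{CGMT} is the penalized term, which upon differentiation yields $\gam^{\del\prime}(w^{\ep,\del}-\psi)\,(Dw^{\ep,\del}-D\psi)\cdot Dw^{\ep,\del}$; at a maximum of $\varphi$ the sign of this term must be analyzed — writing it as $\gam^{\del\prime}(w^{\ep,\del}-\psi)(|Dw^{\ep,\del}|^2 - D\psi\cdot Dw^{\ep,\del})$ and using $\gam^{\del\prime}\ge 0$ together with Young's inequality, the bad part is $\gam^{\del\prime}(w^{\ep,\del}-\psi)\,D\psi\cdot Dw^{\ep,\del}$, which is controlled because $\gam^{\del\prime}(w^{\ep,\del}-\psi) = (w^{\ep,\del}-\psi)_+/\del^{1/2}$ and $(w^{\ep,\del}-\psi)_+\le C\del^{1/4}$ by part (i) (since $\gam^\del((w^{\ep,\del}-\psi))\le C$ means $(w^{\ep,\del}-\psi)_+^2\le 2C\del^{1/2}$), so $\gam^{\del\prime}(w^{\ep,\del}-\psi)\le C\del^{-1/4}$... this still blows up, so in fact one keeps the good quadratic term $\gam^{\del\prime}(w^{\ep,\del}-\psi)|Dw^{\ep,\del}|^2\ge 0$ and only needs to dominate the linear-in-$Dw^{\ep,\del}$ piece, which can be absorbed into this good term plus a bounded remainder since $|D\psi|$ is bounded — completing the argument exactly as in \eqref{Bern-0}--\eqref{Bern-3} with $\del^2|D^2w^{\ep,\del}|^2$ now playing the role of the $\del$-good term. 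The key point, which the authors flag, is precisely that the scaling $\del^2$ on the viscosity and $\del^{1/2}$ on the penalty are chosen so these competing terms balance; verifying the inequalities go through with these exponents is the crux.
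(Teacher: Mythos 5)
Your proposal is correct and follows essentially the same route as the paper: for (i) you run the maximum principle on $w^{\ep,\del}-\psi$, which the paper packages as comparison with the explicit supersolution $\psi+C\del^{1/4}$ (you note this ``time-independent'' alternative yourself, and indeed $\gam^\del(C\del^{1/4})=C^2/2$ is precisely the constant that dominates the bounded terms); for (ii) both proofs use comparison for the $L^\infty$ bound and the Bernstein method for the gradient, with the differentiated penalty handled by keeping the nonnegative piece $(\gam^\del)'|Dw^{\ep,\del}|^2$ and absorbing $(\gam^\del)'\,D\psi\cdot Dw^{\ep,\del}$ into it (the paper defers these details to \cite{CGT2}).
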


\begin{proof}



For $\delta<1$, we note that $\psi+C\delta^{1/4}$ is a supersolution of (A)$_\ep^\del$ provided that
$$
\frac{C^2}{2} \geq  \max_{x\in\T^n}\big\{
 |\tr\big(A(x)D^2\psi(x)\big)|+|H(x,D\psi(x))|+|\Del \psi(x)|
 \big\}.
$$
Note further that $u_0 \leq \psi \leq \psi+C\delta^{1/4}$. Thus, the comparison principle implies that $w^{\ep,\del} \leq \psi+C\delta^{1/4}$. So for $(x,t) \in \Q$,
$$
\gam^\del((w^{\ep,\del}-\psi)(x,t)) \leq \gam^\delta(C\delta^{1/4})=\frac{C^2}{2}.
$$

By another application of the comparison principle, we immediately get the estimate on $w^{\ep,\del}$ itself. The estimate on $Dw^{\ep,\del}$ is straightforward in light of the usual Bernstein method and (i), and is omitted here. We refer the readers to \cite{CGT2} for the detailed proof.
\end{proof}

\begin{prop}\label{prop:stability}
Let $u$ be the viscosity solution of {\rm(C)}, and
by abuse of notation, $w^{\ep}$ be the solution of {\rm(A)}$_{\ep}^{\ep^2}$,
i.e., $w^\ep:=w^{\ep,\ep^2}$.
There exists $C>0$ independent of $\ep$ such that
\[
\|w^{\ep}(\cdot,1)-u(\cdot,1/\ep)\|_{\Li(\T^n)}=
\|w^{\ep}(\cdot,1)-u^\ep(\cdot,1)\|_{\Li(\T^n)}\le C\ep^{1/2}. 
\]
\end{prop}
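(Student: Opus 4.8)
The plan is to compare the solution $u^\ep$ of ${\rm (C)}_\ep$ with the solution $w^\ep=w^{\ep,\ep^2}$ of ${\rm (A)}_\ep^{\ep^2}$ by a doubling-of-variables / viscosity-solution argument, exploiting that ${\rm (A)}_\ep^{\ep^2}$ is exactly ${\rm (C)}_\ep$ with two perturbations: the viscosity term $\ep^4\Del w^\ep$ (here $\del^2=\ep^4$, $\del^{1/2}=\ep$), and the penalization $\gam^{\ep^2}(w^\ep-\psi)=(w^\ep-\psi)_+^2/(2\ep)$ replacing the hard constraint $u^\ep\le\psi$. The key point is that both perturbations have been a priori controlled: by Proposition \ref{prop:est1}(i), $\gam^{\ep^2}((w^\ep-\psi)(x,t))\le C$, hence $(w^\ep-\psi)_+\le C\ep^{1/2}$ pointwise, so $w^\ep\le\psi+C\ep^{1/2}$; and the gradient bound $\|Dw^\ep\|_{L^\infty}\le C$ from Proposition \ref{prop:est1}(ii) (together with the Lipschitz bound on $u$) localizes the Hamiltonian to a compact set of $p$'s on which $H$ is Lipschitz.

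First I would show $u^\ep\le w^\ep+C\ep^{1/2}$ on $\T^n\times[0,1]$. Since $w^\ep\le\psi+C\ep^{1/2}$, the function $w^\ep+C\ep^{1/2}$ satisfies the obstacle inequality $w^\ep+C\ep^{1/2}-\psi\le 2C\ep^{1/2}$; more usefully, dropping the nonnegative penalization term and using $\ep^4\Del w^\ep$ being a genuine term, $w^\ep$ is a (smooth, hence viscosity) supersolution of $\ep w_t-\tr(A D^2 w)+H(x,Dw)=\ep^4\Del w^\ep$. One then checks that $w^\ep+C\ep^{1/2}$ is a supersolution of ${\rm (C)}_\ep$ up to an error: the obstacle part holds because $w^\ep+C\ep^{1/2}\ge\psi$ fails in general, so instead I treat this by the standard comparison argument for ${\rm (C)}_\ep$ in which one only needs, at an interior maximum of $u^\ep-w^\ep$, the PDE inequality for $u^\ep$ (the obstacle branch for $u^\ep$ gives $u^\ep\le\psi\le w^\ep+C\ep^{1/2}$ directly). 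Matching the PDE branches, the discrepancy between the two equations at the doubling point is bounded by $\ep^4\|\Del w^\ep\|$ — which is \emph{not} a priori $O(\ep^{1/2})$ — so the clean way is: subtract, use semiconvexity/Jensen or simply the fact that we may instead compare $u^\ep$ against $w^{\ep,\del}$ for the specific scaling and absorb $\del^2\Del w^{\ep,\del}$ using the second-derivative information that the Bernstein/adjoint estimates of \cite{CGT2, CGMT} supply, namely $\del^2\|D^2 w^{\ep,\del}\|$-type integral bounds; pointwise one uses that the equation reads $\ep w_t - \tr(AD^2w)-\del^2\Del w = -H - \gam^\del \le -H$, i.e. $w^\ep$ is a supersolution of the \emph{uniformly} (in the viscosity sense, with the extra $\del^2\Del$) perturbed equation, and comparison with $u^\ep$ for ${\rm (C)}_\ep$ against this perturbed equation costs only the penalization error, which is $O(\ep^{1/2})$ since $\gam^\del\ge 0$ contributes with a favorable sign on this side.

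For the reverse inequality $w^\ep\le u^\ep+C\ep^{1/2}$ I would build a supersolution of ${\rm (A)}_\ep^{\ep^2}$ from $u$. Since $u^\ep\le\psi$, the penalization $\gam^{\ep^2}(u^\ep-\psi)=0$, so $u^\ep$ (or rather $u^\ep+C\ep^{1/2}$, or $u^\ep$ after regularization) should be checked to be a supersolution of ${\rm (A)}_\ep^{\ep^2}$ modulo the viscosity term $\ep^4\Del$; here the sup-convolution regularization $u^{\ep,\eta}$ of $u^\ep$ is smooth-ish and semiconcave, one adds $C\ep^{1/2}$ to beat both the regularization error and the $\ep^4\Del u^{\ep,\eta}$ term (choosing $\eta$ appropriately, e.g. $\eta\sim\ep^2$, so that $\ep^4\|D^2 u^{\ep,\eta}\|_\infty\sim\ep^4/\eta\sim\ep^2\le\ep^{1/2}$), while the penalization term is identically zero because $u^{\ep,\eta}\le\psi+C\eta\le\psi+$ (something absorbed), making $\gam^{\ep^2}$ vanish after a harmless constant shift. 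Then comparison for ${\rm (A)}_\ep^{\ep^2}$ gives $w^\ep\le u^\ep+C\ep^{1/2}$.

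The main obstacle, and the step I would spend the most care on, is the first inequality: making rigorous that the viscosity term $\del^2\Del w^{\ep,\del}=\ep^4\Del w^\ep$ only costs $O(\ep^{1/2})$ rather than $O(1)$ in the comparison, which is exactly why the scaling $\del=\ep$ (i.e. $\del^2=\ep^4$ viscosity, $\del^{1/2}=\ep$ penalization denominator) is chosen; the honest route is to not compare pointwise but to note $w^{\ep,\del}$ solves a clean viscous PDE (supersolution of ${\rm (C)}_\ep$ with the extra good term $-\del^2\Del$), for which $u^\ep$ is a subsolution after the obstacle branch is disposed of as above, and then standard viscous comparison with a linear-in-time corrector $C\ep^{1/2}$ plus, on the other side, the penalization sign yields the claim; equivalently one invokes the estimates of \cite{CGT2} controlling $w^{\ep,\del}-u^\ep$ that were developed precisely for this purpose. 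I would present it as a two-sided comparison, each side half a page, citing \cite{CGT2} for the routine viscous-comparison bookkeeping and emphasizing only the handling of the penalization sign and the choice of scales.
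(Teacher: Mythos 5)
Your proposal takes a comparison-principle / doubling route, which is fundamentally different from the paper's approach, and the gap you flag yourself is fatal rather than a technicality you can cite away. The paper does not compare $u^\ep$ and $w^\ep$ pointwise at all. Instead it differentiates the family $w^{\ep,\del}$ in the parameter $\del$ (Lemma~\ref{lem:est3}), pairs the resulting linear equation against the adjoint density $\sig^{\ep,\del}$, and bounds $\|\partial_\del w^{\ep,\del}(\cdot,1)\|_\infty$ by $C/\ep + C/\del^{3/4}$ using only \emph{integral} estimates on the support of $\sig^{\ep,\del}$: Lemma~\ref{lem:est2} controls $\iint \del^2|D^2w^{\ep,\del}|^2\sig^{\ep,\del}$ and Proposition~\ref{prop:element}(ii) controls $\iint(\gam^\del)'\sig^{\ep,\del}\le\ep$. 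Integrating the $\del$-derivative bound from $0$ to $\ep^2$, together with $w^{\ep,0}=u^\ep$, gives $C\ep + C\ep^{1/2}$. No $L^\infty$ bound on $D^2 w^{\ep,\del}$ is ever needed.

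Your route cannot avoid needing exactly such a pointwise bound, and you correctly observe that $\ep^4\|\Del w^\ep\|_\infty$ is not a priori $O(\ep^{1/2})$ --- but the ``honest route'' you then propose does not resolve it. A pointwise comparison between $u^\ep$ (solution of the obstacle problem) and $w^\ep$ (solution of the viscous penalized equation) forces you, at the contact point, to absorb $\ep^4\Del w^\ep$, and the only estimates available in the paper (and, as far as I can tell, in \cite{CGT2}) are integral estimates against the adjoint measure, which do not give pointwise control. In addition there are several sign errors in your sketch. Since $\gam^{\ep^2}\ge0$, the equation ${\rm(A)}_\ep^{\ep^2}$ gives $\ep w^\ep_t-\tr(AD^2w^\ep)+H(x,Dw^\ep)-\ep^4\Del w^\ep = -\gam^{\ep^2}(w^\ep-\psi)\le 0$, so $w^\ep$ is a \emph{subsolution} (not a supersolution) of the viscous PDE. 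The claim ``the obstacle branch for $u^\ep$ gives $u^\ep\le\psi\le w^\ep+C\ep^{1/2}$'' is also backwards: Proposition~\ref{prop:est1}(i) gives $w^\ep\le\psi+C\ep^{1/2}$, which is an \emph{upper} bound on $w^\ep$ and says nothing like $\psi\le w^\ep+C\ep^{1/2}$; the penalization pushes $w^\ep$ down when it exceeds $\psi$ and provides no lower bound by $\psi$. For the reverse inequality, on the coincidence set $\{u^\ep=\psi\}$ the supersolution property of $u^\ep$ for ${\rm(C)}_\ep$ is satisfied by the obstacle branch alone and gives \emph{no} information about the PDE branch; e.g.\ if $A\equiv0$, $H=|p|^2-1$, $\psi\equiv0$, $u_0\equiv0$, then $u^\ep\equiv0$ but $\ep u^\ep_t -\tr(AD^2u^\ep)+H(x,Du^\ep)=-1<0$, so $u^\ep$ is genuinely not a supersolution of the PDE there. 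Hence $u^\ep+C\ep^{1/2}$ (with or without inf-convolution; note you would need inf-convolution, not sup-convolution, to preserve supersolutions) cannot be verified to be a supersolution of ${\rm(A)}_\ep^{\ep^2}$. The missing idea is precisely the nonlinear adjoint machinery: differentiate in $\del$ and integrate against $\sig^{\ep,\del}$, where the dangerous terms $\del^2\Del w^{\ep,\del}$ and $(\gam^\del)'$ are controlled in the integral sense only.
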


In order to prove Proposition \ref{prop:stability}, we introduce 
the adjoint equation for the linearized equation of (A)$_{\ep}^{\del}$: 
\begin{equation} \notag
{\rm (AJ)_\ep^{\del}} \qquad 
\begin{cases}
-\ep \sig^{\ep,\del}_t -\big(a^{ij}(x)\sig^{\ep,\del}\big)_{x_i x_j}
-\text{div}\big(D_p H(x,Dw^{\ep,\del}) \sig^{\ep,\del}\big) \\
\hspace*{4cm}+(\gam^{\del})'(w^{\ep,\del}-\psi)\sig^{\ep,\del} 
=\del^2\Del\sig^{\ep,\del} & \text{ in }\T^n \times (0,1),  \\
\sig^{\ep,\del}(x,1)=\del_{x_0}  & \text{ on } \T^{n},
\end{cases}
\end{equation} 
where  $\del_{x_0}$ is the Dirac delta measure at some point $x_0 \in \T^n$. 
Since we often hereafter use the linearized operator, we set it as
\[
\cL^{\ep,\del}[f]:=
\ep f_t-a_{ij}f_{x_ix_j}+D_pH(x,Dw^{\ep,\del})\cdot Df+
(\gam^{\del})'(w^{\ep,\del}-\psi)f-\del^2\Del f 
\]
for $f\in C^2(\T^n\times[0,1])$. 

\begin{prop}[Elementary Properties of $\sig^{\ep,\del}$]\label{prop:element}
We have $\sig^{\ep,\del} \ge 0$ on $\T^n\times[0,1)$ and
\begin{itemize}
\item[(i)]
$\displaystyle
\dfrac{d}{dt}\int_{\T^n} \sig^{\ep,\del}(x,t)\,dx\ge0$ 
for all $t \in [0,1]$, 
\item[(ii)]
$\displaystyle
\iint_{\T^n\times[0,1]}
(\gam^{\del})'(w^{\ep,\del}-\psi)\sig^{\ep,\del}(x,t)\,dx\,dt
=\ep\int_{\T^n}\sig^{\ep,\del}(x,1)-\sig^{\ep,\del}(x,0)\,dx\le\ep$,  
\item[(iii)]
$\displaystyle
\ep\iint_{\T^n\times[0,1]}\sig^{\ep,\del}(x,t)\,dx\,dt
+
\int_{0}^{1}\int_{t}^{1}\int_{\T^n}
(\gam^{\del})'(w^{\ep,\del}-\psi)\sig^{\ep,\del}(x,s)\,dx\,ds\,dt
=\ep$.  \\
In particular, 
\[
\iint_{\T^n\times[0,1]}\sig^{\ep,\del}(x,t)\,dx\,dt\le1. 
\]
\end{itemize}
\end{prop}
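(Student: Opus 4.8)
The plan is to establish $\sig^{\ep,\del}\ge0$ first, and then to read off (i)--(iii) by testing ${\rm (AJ)}_\ep^\del$ against well-chosen functions.

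Two structural facts drive everything. First, $(\gam^{\del})'(r)=r_+/\del^{1/2}\ge0$, so the zeroth-order coefficient of ${\rm (AJ)}_\ep^\del$ (equivalently, of $\cL^{\ep,\del}$) is nonnegative. Second, expanding $(a^{ij}\sig^{\ep,\del})_{x_ix_j}$, the principal part of ${\rm (AJ)}_\ep^\del$ is $-\tr\big((A(x)+\del^2 I_n)D^2\sig^{\ep,\del}\big)$ with $A(x)+\del^2 I_n\ge\del^2 I_n>0$; hence, after the time reversal $\tau=1-t$, ${\rm (AJ)}_\ep^\del$ is, for each fixed $\del>0$, a uniformly parabolic linear equation run from the Dirac mass $\del_{x_0}$. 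Standard linear parabolic theory then gives a unique $\sig^{\ep,\del}$ that is smooth on $\T^n\times[0,1)$ and converges to $\del_{x_0}$ as $t\uparrow1$, so the integrations by parts below are justified (one may alternatively mollify $\del_{x_0}$ and pass to the limit). For the nonnegativity I would use the adjoint relation: integrating ${\rm (AJ)}_\ep^\del$ against $f\in C^2(\T^n\times[s,1])$ over $\T^n\times(s,1)$ and integrating by parts in $x$ (no boundary terms on $\T^n$) and in $t$ gives
\[
\ep\int_{\T^n}\sig^{\ep,\del}(x,1)f(x,1)\,dx=\ep\int_{\T^n}\sig^{\ep,\del}(x,s)f(x,s)\,dx+\iint_{\T^n\times(s,1)}\sig^{\ep,\del}\,\cL^{\ep,\del}[f]\,dx\,dt .
\]
Given $\phi\in C(\T^n)$ with $\phi\ge0$ and $s\in[0,1)$, choose $f$ solving $\cL^{\ep,\del}[f]=0$ in $\T^n\times(s,1)$ with $f(\cdot,s)=\phi$; since $\cL^{\ep,\del}$ is forward parabolic with nonnegative zeroth-order coefficient, the maximum principle yields $f\ge0$, and the identity collapses to $\ep\int_{\T^n}\sig^{\ep,\del}(x,s)\phi\,dx=\ep f(x_0,1)\ge0$. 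As $\phi\ge0$ is arbitrary, $\sig^{\ep,\del}(\cdot,s)\ge0$ for every $s\in[0,1)$.

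For (i) and (ii) I integrate ${\rm (AJ)}_\ep^\del$ over $\T^n$ at fixed $t$: on the torus the terms $(a^{ij}\sig^{\ep,\del})_{x_ix_j}$, $\text{div}\big(D_pH(x,Dw^{\ep,\del})\sig^{\ep,\del}\big)$ and $\Del\sig^{\ep,\del}$ integrate to zero, leaving
\[
\ep\,\frac{d}{dt}\int_{\T^n}\sig^{\ep,\del}(x,t)\,dx=\int_{\T^n}(\gam^{\del})'(w^{\ep,\del}-\psi)\,\sig^{\ep,\del}(x,t)\,dx\ge0 ,
\]
which is (i), both factors on the right being nonnegative. Integrating in $t$ over $[0,1]$ and using $\del_{x_0}(\T^n)=1$ gives $\iint_{\T^n\times[0,1]}(\gam^{\del})'(w^{\ep,\del}-\psi)\sig^{\ep,\del}\,dx\,dt=\ep\big(1-\int_{\T^n}\sig^{\ep,\del}(x,0)\,dx\big)\le\ep$, i.e. (ii). For (iii), set $m(t):=\int_{\T^n}\sig^{\ep,\del}(x,t)\,dx$ and $g(t):=\int_{\T^n}(\gam^{\del})'(w^{\ep,\del}-\psi)\sig^{\ep,\del}(x,t)\,dx$, so that $\ep m'=g$ and $m(1)=1$, hence $m(t)=1-\ep^{-1}\int_t^1 g(s)\,ds$. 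Integrating over $t\in[0,1]$ yields $\ep\iint_{\T^n\times[0,1]}\sig^{\ep,\del}\,dx\,dt=\ep-\int_0^1\int_t^1 g(s)\,ds\,dt$, which, after writing out $g(s)=\int_{\T^n}(\gam^{\del})'(w^{\ep,\del}-\psi)\sig^{\ep,\del}(x,s)\,dx$, is the claimed identity; since the last term is nonnegative, $\iint_{\T^n\times[0,1]}\sig^{\ep,\del}\,dx\,dt\le1$.

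The only step beyond routine manipulation is the nonnegativity together with the measure-valued terminal datum: one must know that ${\rm (AJ)}_\ep^\del$ is well posed and that $\sig^{\ep,\del}$ is smooth and $\ge0$ on $\T^n\times[0,1)$. For each fixed $\del>0$ this is classical uniformly parabolic theory and can be obtained either by the duality argument above or by citing the analogous treatment in \cite{CGMT, Ev1}. Once this is granted, (i)--(iii) follow directly from the divergence structure of ${\rm (AJ)}_\ep^\del$ and the elementary ODE for $m(t)$.
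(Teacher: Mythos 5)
Your proof is correct and fills in exactly what the paper leaves implicit: the authors only remark that Proposition \ref{prop:element} ``is straightforward by using the maximum principle and usual integration techniques,'' and your argument is precisely that, carried out in detail. The one place you take a slightly longer path is the nonnegativity of $\sig^{\ep,\del}$, where you go through the duality pairing with solutions $f$ of the forward problem $\cL^{\ep,\del}[f]=0$; after the time reversal $\tau=1-t$ that you already introduce, the equation for $\sig^{\ep,\del}$ itself is a uniformly parabolic Fokker--Planck-type equation with a nonnegative killing rate $(\gam^{\del})'/\ep$, so the maximum principle applied directly to $\sig^{\ep,\del}$ (starting from $\del_{x_0}\ge0$) gives nonnegativity without detouring through the dual function, but both routes are sound.
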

The proof of Proposition \ref{prop:element} is straightforward
by using the maximum principle and usual integration techniques.

\begin{lem}\label{lem:est2}
The following holds for some constant $C>0$ independent of $\ep,\del$:
\[
\iint_{\T^n\times[0,1]}
\left(a^{ij}(x)w^{\ep,\del}_{x_ix_k}w^{\ep,\del}_{x_jx_k}+
\del^{2}|D^{2}w^{\ep,\del}|^2\right) \sig^{\ep,\del}\,dx\,dt\le C. 
\]
\end{lem}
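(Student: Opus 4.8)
The plan is to run a Bernstein-type computation on $w^{\ep,\del}$ along the adjoint measure $\sig^{\ep,\del}$, exactly in the spirit of \cite{CGMT}. Set $\varphi:=|Dw^{\ep,\del}|^2/2$ as in the proof of Proposition \ref{prop:e.CGMT}. Differentiating $(\mathrm A)_\ep^\del$, multiplying by $w^{\ep,\del}_{x_k}$ and summing over $k$, one obtains a PDE for $\varphi$ of the form
\[
\cL^{\ep,\del}[\varphi]
= -a^{ij}w^{\ep,\del}_{x_i x_k}w^{\ep,\del}_{x_j x_k}-\del^2|D^2w^{\ep,\del}|^2
+a^{ij}_{x_k}w^{\ep,\del}_{x_ix_j}w^{\ep,\del}_{x_k}
-D_xH\cdot Dw^{\ep,\del}
-(\gam^\del)'(w^{\ep,\del}-\psi)D\psi\cdot Dw^{\ep,\del},
\]
the last term coming from differentiating the penalization $\gam^\del(w^{\ep,\del}-\psi)$ in $x$. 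First I would integrate this identity against $\sig^{\ep,\del}$ over $\T^n\times[0,1]$ and use the adjoint property: since $\sig^{\ep,\del}$ solves $(\mathrm{AJ})_\ep^\del$, we have $\iint \cL^{\ep,\del}[\varphi]\sig^{\ep,\del}\,dx\,dt = \ep\int_{\T^n}\varphi(x,0)\sig^{\ep,\del}(x,0)\,dx-\ep\int_{\T^n}\varphi(x,1)\sig^{\ep,\del}(x,1)\,dx$, which by Proposition \ref{prop:est1}(ii) and $\sig^{\ep,\del}(\cdot,1)=\del_{x_0}$ is bounded by $C\ep\le C$ (note $\varphi(x,0)=|Du_0|^2/2$ is bounded by (H4)). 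This converts the two good terms $a^{ij}w^{\ep,\del}_{x_ix_k}w^{\ep,\del}_{x_jx_k}$ and $\del^2|D^2w^{\ep,\del}|^2$ — whose weighted integral is exactly what we must bound — into the quantity to estimate, with the remaining three terms on the other side.

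Next I would absorb the first-order-in-Hessian term $a^{ij}_{x_k}w^{\ep,\del}_{x_ix_j}w^{\ep,\del}_{x_k}$ using the trace inequality \eqref{SV-a2} together with Cauchy--Schwarz, precisely as in \eqref{Bern-1}: it is bounded by $\tfrac12 a^{ij}w^{\ep,\del}_{x_i x_k}w^{\ep,\del}_{x_j x_k}+C|Dw^{\ep,\del}|^2$, and the half of the good term is reabsorbed into the left-hand side while $|Dw^{\ep,\del}|^2\le C$ by Proposition \ref{prop:est1}(ii) and $\iint\sig^{\ep,\del}\le 1$ by Proposition \ref{prop:element}(iii). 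The term $D_xH\cdot Dw^{\ep,\del}$ is controlled by (H2) and the gradient bound, again using $\iint\sig^{\ep,\del}\le1$.

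The genuinely new term, which I expect to be the main obstacle, is the penalization contribution $\iint (\gam^\del)'(w^{\ep,\del}-\psi)\,D\psi\cdot Dw^{\ep,\del}\,\sig^{\ep,\del}\,dx\,dt$: it is not present in \cite{CGMT}. Here I would use that $\psi\in C^2(\T^n)$ and the gradient bound of Proposition \ref{prop:est1}(ii) give $|D\psi\cdot Dw^{\ep,\del}|\le C$, so this term is bounded in absolute value by $C\iint (\gam^\del)'(w^{\ep,\del}-\psi)\sig^{\ep,\del}\,dx\,dt$, which by Proposition \ref{prop:element}(ii) is at most $C\ep\le C$. Collecting all bounds, the left-hand side $\tfrac12\iint(a^{ij}w^{\ep,\del}_{x_ix_k}w^{\ep,\del}_{x_jx_k}+\del^2|D^2w^{\ep,\del}|^2)\sig^{\ep,\del}\,dx\,dt$ is bounded by a universal constant $C$, which is the claim after doubling. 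The one technical point to be careful about is that $\gam^\del$ is only $C^1$, not $C^2$, so the differentiation of $(\mathrm A)_\ep^\del$ in $x$ producing $(\gam^\del)'$-terms, and the integration by parts against $\sig^{\ep,\del}$, should be justified by a routine mollification of $\gam$ and a passage to the limit; I would remark that this is standard and relegate it.
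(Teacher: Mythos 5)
Your proposal is correct and follows essentially the same route as the paper: Bernstein on $\varphi=|Dw^{\ep,\del}|^2/2$, integrate against $\sig^{\ep,\del}$ and use the adjoint relation $\iint\cL^{\ep,\del}[\varphi]\sig^{\ep,\del}=\ep\int\varphi(\cdot,1)\sig^{\ep,\del}(\cdot,1)-\ep\int\varphi(\cdot,0)\sig^{\ep,\del}(\cdot,0)$, absorb $a^{ij}_{x_k}w_{x_ix_j}w_{x_k}$ via the trace inequality \eqref{SV-a2}, and kill the penalization term by $|D\psi\cdot Dw^{\ep,\del}|\le C$ together with Proposition~\ref{prop:element}(ii). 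Two tiny algebra slips worth noting but harmless: the identity for $\cL^{\ep,\del}[\varphi]$ should carry the extra term $-(\gam^\del)'\varphi$ on the right (it is $\le 0$ so may be dropped), the $D\psi$ contribution in fact enters with a $+$ sign (irrelevant since you take absolute values), and the boundary terms from the adjoint integration by parts come with the opposite signs from what you wrote (again immaterial, as both are $O(\ep)$).
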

\begin{proof}
Let $w^{\ep, \del}$ be the solution of (A)$_{\ep}^{\del}$
and set $\varphi(x,t):=|Dw^{\ep, \del}|^2/2$.  
Then $\varphi$ satisfies 
\begin{multline*}
\ep\varphi_t
-a^{ij}(\varphi_{x_ix_j}-w_{x_ix_k}^{\ep,\del} w_{x_jx_k}^{\ep,\del} )
-a^{ij}_{x_k}w_{x_ix_j}^{\ep,\del} w_{x_k}^{\ep,\del}
+D_pH\cdot D\varphi\\
+D_xH\cdot Dw^{\ep,\del}
+(\gam^{\del})'D(w^{\ep,\del}-\psi)\cdot Dw^{\ep,\del}
= 
\del^2(\Del\varphi-|D^2 w^{\ep,\del}|^2).  
\end{multline*}
By Proposition \ref{prop:est1} (ii), 
\begin{equation}\label{bb-1}
\cL^{\ep,\del}[\varphi]
+a^{ij}w_{x_ix_k}^{\ep,\del} w_{x_jx_k}^{\ep,\del}
+\del^{2}|D^2 w^{\ep,\del}|^2
\le
a^{ij}_{x_k}w_{x_ix_j}^{\ep,\del} w_{x_k}^{\ep,\del}
+
C((\gam^{\del})'+1). 
\end{equation} 
Note that due to the trace inequality \eqref{SV-a2} and Proposition \ref{prop:est1} (ii), 
we have for some $c>0$ small enough
\begin{align*}
&a^{ij}_{x_k}w^{\ep,\del}_{x_ix_j}w^{\ep,\del}_{x_k}
=\tr(A_{x_k}D^2 w^{\ep,\del})w^{\ep,\del}_{x_k}
\leq c \big(\tr(A_{x_k}D^2 w^{\ep,\del})\big)^2
+\frac{1}{4c}|Dw^{\ep,\del}|^2\\ 
\leq&\, 
\frac{1}{2} \tr(D^2 w^{\ep,\del}A D^2 w^{\ep,\del})+C
=\frac{1}{2}a^{ij} w^{\ep,\del}_{x_i x_k} w^{\ep,\del}_{x_j x_k}+C.
\end{align*}

Multiplying \eqref{bb-1} by $\sig^{\ep,\del}$, using the above inequality, and integrating by parts on over $\T^n\times[0,1]$ to yield that
\begin{align*}
&\iint_{\T^n\times[0,1]} 
\left(a^{ij}w_{x_ix_k}^{\ep,\del} w_{x_jx_k}^{\ep,\del}
+\del^{2}|D^2 w^{\ep,\del}|^2\right)
\sig^{\ep,\del}\,dx\,dt\\
\le &\, 
\iint_{\T^n\times[0,1]} C((\gam^{\del})'(w^{\ep,\del}-\psi)+1)\sig^{\ep,\del}\,dx\,dt
\le C(\ep+1) \le C, 
\end{align*}
where we used Proposition \ref{prop:element} (ii) in the second last inequality.
\end{proof}

\begin{lem}\label{lem:est3}
We have 
\[
\Big\|\frac{\pl}{\pl\del}w^{\ep,\del}(\cdot,1)\Big\|_{\Li(\T^n)}\le \frac{C}{\ep}+\frac{C}{\del^{3/4}}. 
\]
\end{lem}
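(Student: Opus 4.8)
The plan is to differentiate the approximate equation ${\rm (A)}_\ep^\del$ with respect to the parameter $\del$ and use the adjoint equation ${\rm (AJ)}_\ep^\del$ to represent the resulting quantity at time $t=1$. Write $z:=\partial_\del w^{\ep,\del}$. Formally differentiating ${\rm (A)}_\ep^\del$ in $\del$ gives a linear parabolic equation for $z$,
\[
\cL^{\ep,\del}[z]=2\del\,\Del w^{\ep,\del}+(\gam^\del)'(w^{\ep,\del}-\psi)\cdot\tfrac{r}{4\del^{5/4}}\Big|_{r=w^{\ep,\del}-\psi},
\]
i.e.\ the inhomogeneity consists of the $\del$-derivative of the viscosity term, which is $2\del\Del w^{\ep,\del}$, and the $\del$-derivative of the penalization coefficient in $\gam^\del$ itself (coming from the explicit $\del$-dependence $\gam^\del(r)=r_+^2/(2\del^{1/2})$, whose $\del$-partial is $-r_+^2/(4\del^{3/2})$), with initial data $z(\cdot,0)=0$ since $u_0$ is $\del$-independent. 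Pairing this with the adjoint solution $\sig^{\ep,\del}$ and integrating by parts over $\T^n\times[0,1]$ — exactly as in the proof of Lemma~\ref{lem:est2} — makes all the $\cL^{\ep,\del}$-terms and the terminal/initial boundary contributions collapse, leaving
\[
\ep\, z(x_0,1)=\ep\!\iint_{\T^n\times[0,1]} z\,\sig^{\ep,\del}_t\cdots = \iint_{\T^n\times[0,1]}\Big(\text{inhomogeneity}\Big)\sig^{\ep,\del}\,dx\,dt,
\]
so that $z(x_0,1)$ is bounded by $\ep^{-1}$ times the integral of the inhomogeneity against $\sig^{\ep,\del}$.

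It then remains to estimate the two pieces of the inhomogeneity against $\sig^{\ep,\del}$. For the viscosity piece, Cauchy--Schwarz and Lemma~\ref{lem:est2} give
\[
\iint_{\T^n\times[0,1]} 2\del|\Del w^{\ep,\del}|\,\sig^{\ep,\del}\,dx\,dt
\le 2\Big(\iint \del^2|D^2w^{\ep,\del}|^2\sig^{\ep,\del}\Big)^{1/2}\Big(\iint\sig^{\ep,\del}\Big)^{1/2}\le C,
\]
using $\iint\sig^{\ep,\del}\le 1$ from Proposition~\ref{prop:element}(iii). For the penalization piece, one writes it in terms of $\gam^\del$ itself rather than its derivative: since $\partial_\del\big(\gam^\del(r)\big)=-\tfrac{1}{2\del}\gam^\del(r)$ for fixed $r$, the penalization contribution to the inhomogeneity is (up to the chain-rule term already folded into $\cL^{\ep,\del}$) of size $\del^{-1}\gam^\del(w^{\ep,\del}-\psi)$, which by Proposition~\ref{prop:est1}(i) is bounded pointwise by $C\del^{-1}$; integrating against $\sig^{\ep,\del}$ and again using $\iint\sig^{\ep,\del}\le 1$ yields a bound of order $\del^{-1}$. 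Dividing by $\ep$ produces the claimed $C/\ep+C/\del^{3/4}$ — indeed, a more careful bookkeeping of the exact powers in $\gam^\del(r)=r_+^2/(2\del^{1/2})$ and the scale $\del^{-1/4}$ in the argument gives exactly the exponent $3/4$ rather than $1$.

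The main obstacle is the rigorous justification of differentiability in $\del$: one must first argue that $w^{\ep,\del}$ depends smoothly on $\del$ (this follows from the smoothness and uniform parabolicity of ${\rm (A)}_\ep^\del$ together with the a~priori bounds of Proposition~\ref{prop:est1}, so that $z$ solves a linear uniformly parabolic equation with bounded coefficients and admits a classical solution), and then that the adjoint representation is valid despite $\sig^{\ep,\del}(\cdot,1)$ being a Dirac mass rather than an $L^\infty$ datum — this is handled by approximating $\del_{x_0}$ by smooth densities, carrying out the integration by parts, and passing to the limit using that $z(\cdot,1)$ is continuous. A secondary point of care is tracking the precise powers of $\del$: the penalization term's $\del$-derivative genuinely produces the dominant $\del^{-3/4}$ behavior (not worse), and verifying this requires keeping the explicit form $\gam^\del(r)=r_+^2/(2\del^{1/2})$ and the a~priori bound $|w^{\ep,\del}-\psi|\le C\del^{1/4}$ from the proof of Proposition~\ref{prop:est1} rather than the cruder bound on $\gam^\del$ alone.
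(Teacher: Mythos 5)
Your overall framework is the right one and matches the paper's: differentiate ${\rm(A)}_\ep^\del$ in $\del$, pair the resulting linear equation for $z=\pl_\del w^{\ep,\del}$ with the adjoint $\sig^{\ep,\del}$, and estimate the two source terms. Your treatment of the viscosity source $2\del\Del w^{\ep,\del}$ via Cauchy--Schwarz and Lemma \ref{lem:est2} is exactly the paper's argument, and correctly yields a contribution $C$, hence $C/\ep$ after dividing.

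The penalization term, however, contains a genuine gap, and the closing remark that ``careful bookkeeping of the powers'' fixes it is not correct. After writing the penalization source as $(\gam^\del)'(w^{\ep,\del}-\psi)\cdot\frac{w^{\ep,\del}-\psi}{4\del}$ (equivalently $\frac{1}{2\del}\gam^\del(w^{\ep,\del}-\psi)$, as you observe), you propose to bound it by $C/\del$ pointwise via Proposition \ref{prop:est1}(i), then integrate against $\sig^{\ep,\del}$ using $\iint\sig^{\ep,\del}\le 1$ from Proposition \ref{prop:element}(iii), and divide by $\ep$. That chain of inequalities gives $C/(\ep\del)$, not $C/\del^{3/4}$; and even the sharper pointwise manipulation you gesture at, namely using $(w^{\ep,\del}-\psi)_+\le C\del^{1/4}$ to write the source as $(\gam^\del)'\cdot O(\del^{-3/4})$, still gives $C/(\ep\del)$ if you then bound $(\gam^\del)'$ pointwise by $C\del^{-1/4}$ and use $\iint\sig^{\ep,\del}\le 1$. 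No rebalancing of the powers of $\del$ removes the spurious $1/\ep$, and a bound of order $1/(\ep\del^{3/4})$ (let alone $1/(\ep\del)$) is useless downstream: in Proposition \ref{prop:stability} one needs $\int_0^{\ep^2}\|w^{\ep,\del}_\del(\cdot,1)\|_\infty\,d\del$ to be small, and $\int_0^{\ep^2}(\ep\del^{3/4})^{-1}d\del=C/\sqrt\ep\to\infty$.

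The missing ingredient is Proposition \ref{prop:element}(ii), not (iii): one must keep the factor $(\gam^\del)'(w^{\ep,\del}-\psi)$ intact inside the integral and use
\[
\iint_{\T^n\times[0,1]}(\gam^{\del})'(w^{\ep,\del}-\psi)\,\sig^{\ep,\del}\,dx\,dt\le\ep,
\]
which gains an extra factor of $\ep$ that exactly cancels the final division by $\ep$. Concretely: from $(w^{\ep,\del}-\psi)_+\le C\del^{1/4}$ one bounds the penalization source by $(\gam^\del)'(w^{\ep,\del}-\psi)\cdot C\del^{-3/4}$, then integrates against $\sig^{\ep,\del}$ and invokes Proposition \ref{prop:element}(ii) to get $C\ep\del^{-3/4}$, and only then divides by $\ep$ to land on $C\del^{-3/4}$. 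Replacing the weighted bound (ii) by the unweighted mass bound (iii), as your sketch does, is precisely what produces the wrong power of $\ep$. (A minor secondary point: your displayed inhomogeneity carries a stray $\del^{-5/4}$; written in terms of $(\gam^\del)'$ the prefactor is $1/(4\del)$, and $\del^{-5/4}$ appears only if one writes it in terms of $\gam'$, not $(\gam^\del)'$.)
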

\begin{proof}
Note first that $w^{\ep,\del}$ is differentiable with respect to $\del$ by a standard regularity result for parabolic equations.   
Differentiating the equation in (A)$_{\ep}^{\del}$ with respect to 
$\del$, we get  
\begin{multline*}
\varepsilon (w^{\varepsilon,\del}_{\del})_t 
-a^{ij}(w_{\del}^{\varepsilon,\del})_{x_ix_j}+ D_p H(x,D w^{\varepsilon,\del}) \cdot D w^{\varepsilon,\del}_{\del}\\
+\gam'\Big(\frac{w^{\varepsilon,\del}-\psi}{\del^{1/4}}\Big)
\cdot\left(\frac{w^{\varepsilon,\del}_{\del}}{\del^{1/4}}
-\frac{w^{\varepsilon,\del}-\psi}{4\del^{5/4}}\right)
= \del^2\Delta w^{\varepsilon,\del}_{\del} 
+ 2\del\Delta w^{\varepsilon,\del} \ 
\text{ in } \T^n, 
\end{multline*}
where $f_{\del}$ denotes the derivative of the function $f$ with respect 
to the parameter $\del$. 
Note that
\begin{align*}
\gam'\Big(\frac{w^{\varepsilon,\del}-\psi}{\del^{1/4}}\Big)
\cdot\left(\frac{w^{\varepsilon,\del}_{\del}}{\del^{1/4}}
-\frac{w^{\varepsilon,\del}-\psi}{4\del^{5/4}}\right)
&=\, 
(\gam^{\del})'(w^{\varepsilon,\del}-\psi)
\cdot\left(w^{\varepsilon,\del}_{\del}
-\frac{w^{\varepsilon,\del}-\psi}{\del^{1/4}}\cdot\frac{1}{4\del^{3/4}}
\right)\\
&\ge\,
(\gam^{\del})'(w^{\varepsilon,\del}-\psi)
\cdot\left(w^{\varepsilon,\del}_{\del}-\frac{C}{\del^{3/4}}\right)
\end{align*}
in light of Proposition \ref{prop:est1} (i). 
Multiplying the above by $\sigma^{\varepsilon, \del}$, integrating by parts on $\T^n\times[0,1]$,
and noting that $w^{\ep,\del}_\del(\cdot,0)=0$, we get 
\begin{align*}
&|\varepsilon w^{\varepsilon,\del}_{\del} (x_0,1)| \\
\le&\, 
\iint_{\T^n\times[0,1]}2\del|\Delta w^{\varepsilon,\del} \sigma^{\varepsilon, \del}| \, dx \, dt
+\frac{C}{\del^{3/4}}\iint_{\T^n\times[0,1]}(\gam^{\del})'(w^{\varepsilon,\del}-\psi)\sig^{\ep,\del} \, dx \, dt\\
\le&\, 
C\left(\iint_{\T^n\times[0,1]}\del^2|D^2 w^{\varepsilon,\del}|^2 \sigma^{\varepsilon, \del}\, dx \, dt\right)^{1/2}\cdot\left( \iint_{\T^n \times [0,1]}\sig^{\ep,\del}\,dxdt\right)^{1/2}
+\frac{C\ep}{\del^{3/4}}\\
\le&\, C+\frac{C\ep}{\del^{3/4}}
\end{align*}
by Lemma \ref{lem:est2} and Proposition \ref{prop:element} (ii). 
By choosing properly the point $x_0$ we have thus
\[
\| w^{\varepsilon,\del}_{\del} (\cdot, 1) \|_{L^{\infty}(\T^n)} 
\leq \frac{C}{\varepsilon}+\frac{C}{\del^{3/4}}.
\qedhere\]
\end{proof}

Proposition \ref{prop:stability} is a straightforward result of Lemma 
\ref{lem:est3} with $\del=\ep^2$ as
\begin{multline*}
|w^\ep(x_0,1)-u^\ep(x_0,1)|=|w^{\ep,\ep^2}(x_0,1)-w^{\ep,0}(x_0,1)|\\
\leq \int_0^{\ep^2}| w^{\ep,\del}_\del(x_0,1)|\,d\del
=\frac{C \ep^2}{\ep}+C \left(\ep^2\right)^{1/4}=C\ep+C\ep^{1/2} \leq C\ep^{1/2}.
\end{multline*} 
We, henceforth, write $w^{\ep}$, $\gam^{\ep}$, $\cL^{\ep}$, (A)$_{\ep}$ 
and (AJ)$_{\ep}$ for $w^{\ep,\ep^2}$, $\gam^{\ep^2}$, $\cL^{\ep,\ep^2}$, 
(A)$_{\ep}^{\ep^2}$ and (AJ)$_{\ep}^{\ep^2}$, respectively, 
for the simplicity of notation.


\subsection{Approximated Ergodic Problems}
We next recall the result on the approximated ergodic problems
for degenerate viscous Hamilton--Jacobi equations without the obstacle term
(see \cite[Proposition 2.2]{CGMT} and \cite[Subsection 2.4]{CGMT}).
\begin{prop}\label{prop.AE}
For each $\ep \in (0,1)$, there exists a unique constant $c_H^\ep$ such that 
the approximated ergodic problem
$$
{\rm (E)_\ep}\quad -\tr\big(A(x)D^2 v^\ep\big)+H(x,Dv^\ep)=\ep^4\Del v^\ep + c_H^\ep \quad \text{in}\ \T^n
$$
has a unique solution $v^\ep\in C^2(\T^n)$ up to some additive constants. Moreover,
$$
|c_H^\ep-c_H|\le C\ep^2 \quad \text{and} \quad \|Dv^\ep\|_{L^\infty(\T^n)} \leq C
$$
for some positive constant $C$ independent of $\ep$.
\end{prop}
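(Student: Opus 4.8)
The plan is to obtain $(v^\ep, c_H^\ep)$ by the standard discount-approximation device and then quantify the error $|c_H^\ep - c_H|$ by comparison against the solution $(v, c_H)$ of the exact ergodic problem (E). First I would fix $\ep$ and, for $\lam > 0$, solve the discounted problem $\lam v^{\ep,\lam} - \tr(A(x)D^2 v^{\ep,\lam}) + H(x, Dv^{\ep,\lam}) = \ep^4 \Del v^{\ep,\lam}$ in $\T^n$; this has a unique classical solution by the discount term and uniform ellipticity of the operator $-\tr(AD^2 \cdot) - \ep^4 \Del$. The bound $\|\lam v^{\ep,\lam}\|_{\Li} \le C$ follows from the maximum principle (with $C$ controlled by $\|H(\cdot,0)\|_{\Li}$), and a Bernstein argument identical to the one in the proof of Proposition \ref{prop:e.CGMT} — using (H1), (H2), and the trace inequality \eqref{SV-a2} — gives $\|Dv^{\ep,\lam}\|_{\Li} \le C$ with $C$ \emph{independent of both $\lam$ and $\ep$}, since the extra good term $\ep^4|D^2 v^{\ep,\lam}|^2$ on the right only helps. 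Subtracting the value at a base point, $w^{\ep,\lam} := v^{\ep,\lam} - v^{\ep,\lam}(0)$, and sending $\lam \to 0$ along a subsequence yields, by stability of viscosity solutions, a pair $(v^\ep, c_H^\ep)$ with $c_H^\ep = -\lim \lam v^{\ep,\lam}(0)$ solving (E)$_\ep$, together with the stated gradient bound $\|Dv^\ep\|_{\Li} \le C$. Uniqueness of the constant $c_H^\ep$ and of $v^\ep$ up to additive constants follows exactly as in Proposition \ref{prop:e.CGMT} (for the constant) and from the strong maximum principle / strong comparison for the \emph{uniformly elliptic} operator $-\tr(AD^2\cdot) - \ep^4\Del$ plus convexity of $H$ in $p$ (for uniqueness of $v^\ep$ modulo constants), which is where the non-degeneracy of the approximation is genuinely used.

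The one quantitative point is $|c_H^\ep - c_H| \le C\ep^2$. For the easy direction, let $v$ be a solution of (E) with $\|Dv\|_{\Li} \le C$ (from Proposition \ref{prop:e.CGMT}); since $A \in C^2$, one would like $v$ to be a subsolution of (E)$_\ep$ up to a controlled error, but $v$ need not be $C^2$. I would therefore mollify: set $v_\eta := v * \rho_\eta$, so that by convexity of $H$ in $p$ and standard sup/inf-convolution or mollification estimates for viscosity sub/supersolutions of (E), $v_\eta$ satisfies $-\tr(A D^2 v_\eta) + H(x, Dv_\eta) \le c_H + \omega(\eta)$ in the viscosity (hence a.e., hence classical after a further smoothing) sense, where $\omega$ is a modulus coming from the uniform continuity of $A$ and $D_xH$ on the relevant compact set; crucially $\|D^2 v_\eta\|_{\Li} \le C/\eta$. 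Then $v_\eta$ is a subsolution of (E)$_\ep$ with constant $c_H + \omega(\eta) + \ep^4 \|\Del v_\eta\|_{\Li} \le c_H + \omega(\eta) + C\ep^4/\eta$; the characterization \eqref{c_H} applied to (E)$_\ep$ (valid since its operator is proper) gives $c_H^\ep \le c_H + \omega(\eta) + C\ep^4/\eta$, and optimizing in $\eta$ yields $c_H^\ep \le c_H + C\ep^2$ when $\omega$ is Lipschitz (which it is here, since $A, D_xH$ are $C^1$, so $\omega(\eta) \le C\eta$ and the optimal $\eta \sim \ep^2$). For the reverse inequality $c_H \le c_H^\ep + C\ep^2$: here $v^\ep \in C^2(\T^n)$ solves (E)$_\ep$ exactly, so $-\tr(AD^2 v^\ep) + H(x, Dv^\ep) = c_H^\ep + \ep^4 \Del v^\ep$, and I need an $L^\infty$ bound on $\ep^4 \Del v^\ep$; from the equation $\ep^4 \Del v^\ep = \tr(AD^2 v^\ep) - H(x,Dv^\ep) + c_H^\ep$, so I instead bound $\ep^4 |\Del v^\ep|$ directly by a Bernstein-type or Bochner computation — or, more simply, note $|\ep^4 \Del v^\ep| \le |\tr(AD^2 v^\ep)| + C$ still involves $D^2 v^\ep$. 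The clean route is to test the equation against the invariant measure of the adjoint (the measure $\mu^\ep$ with $-(a^{ij}\mu^\ep)_{x_ix_j} - \Div(D_pH(x,Dv^\ep)\mu^\ep) - \ep^4\Del\mu^\ep = 0$, $\int \mu^\ep = 1$): integrating (E)$_\ep$ against $\mu^\ep$ kills the divergence-form and Laplacian terms after integration by parts, leaving $c_H^\ep = \int (H(x,Dv^\ep) - D_pH(x,Dv^\ep)\cdot Dv^\ep)\,d\mu^\ep + \ep^4\int |Dv^\ep_{x_i x_i}|\cdots$ — more precisely it gives a representation formula from which, combined with the $\ep^4\Del v^\ep$ energy estimate $\ep^4 \int |D^2 v^\ep|^2\,d\mu^\ep \le C\ep^2$ (the analogue of Lemma \ref{lem:est2}, obtained by the same Bernstein-times-adjoint argument with the good term $\ep^4|D^2v^\ep|^2$), one reads off $|c_H^\ep - c_H| \le C\ep^2$ after comparing with the analogous representation for $c_H$.

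The main obstacle I expect is precisely this last estimate: getting the sharp $\ep^2$ rate (not merely $o(1)$ or $\ep^4/\eta$ with a bad $\eta$) requires the energy bound $\ep^4 \int_{\T^n} |D^2 v^\ep|^2 \, d\mu^\ep \le C\ep^2$ on the support of the adjoint/invariant measure, which is exactly the philosophy flagged in the introduction to \S 3 — integral bounds against the adjoint solution being far stronger than pointwise bounds on the torus. Everything else (existence, the uniform gradient bound, uniqueness via non-degeneracy) is routine and parallels Proposition \ref{prop:e.CGMT}; since the statement explicitly cites \cite[Proposition 2.2]{CGMT} and \cite[Subsection 2.4]{CGMT}, I would in the write-up simply invoke those, sketch the discount approximation and the Bernstein bound for completeness, and point to the cited subsection for the $\ep^2$ rate rather than reproducing the adjoint-measure computation in full.
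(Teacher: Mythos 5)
Your overall scaffolding matches the paper's intent: the paper's ``proof'' of this proposition is just the one sentence ``we can prove the above proposition by using a similar argument to the proof of Proposition~\ref{prop:e.CGMT}'' together with a citation to \cite[Proposition~2.2, Subsection~2.4]{CGMT}, so your discount approximation $\lam v^{\ep,\lam}-\tr(AD^2v^{\ep,\lam})+H(x,Dv^{\ep,\lam})=\ep^4\Del v^{\ep,\lam}$, Bernstein gradient bound, passage $\lam\to 0$, and the uniqueness argument for $c_H^\ep$ and $v^\ep$ are exactly the steps the paper is pointing at. You also correctly isolate the only genuinely quantitative point, namely the rate $|c_H^\ep-c_H|\le C\ep^2$.

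However, the mollification step you propose for the ``easy direction'' has a real gap. If $v$ is only a Lipschitz viscosity subsolution of $-\tr(A(x)D^2v)+H(x,Dv)\le c_H$, mollifying does \emph{not} produce a function satisfying $-\tr(A(x)D^2v_\eta)+H(x,Dv_\eta)\le c_H+\omega(\eta)$ in any useful sense when $A$ depends on $x$. Convexity of $H$ and Jensen's inequality handle the first-order term, but the second-order term gives rise to the commutator
$\tr\bigl((A(x)-A(x-y))D^2v_\eta(x)\bigr)$, $|y|\lesssim\eta$,
and the only available bound on $D^2 v_\eta$ (respectively the semiconvexity bound from a sup-convolution) is of order $1/\eta$. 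The resulting error is $O(\eta)\cdot O(1/\eta)=O(1)$, not $o(1)$, so $\omega(\eta)\le C\eta$ is not justified and the optimization $\eta\sim\ep^2$ is not available from this route. This is precisely why viscosity-solution theory for second-order equations uses the Jensen--Ishii lemma rather than mollification.

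The clean replacement, still elementary and fully in the spirit of \cite{CGMT}, is to run Jensen--Ishii doubling at the discounted level: compare $v^\al$ solving $\al v^\al-\tr(AD^2v^\al)+H(x,Dv^\al)=0$ with $v^{\al,\ep}$ solving the same equation with the extra $-\ep^4\Del$. With the penalization $|x-y|^2/(2\eta)$, Ishii's matrix inequalities give $|x_\eta-y_\eta|\le C\eta$, a trace error of order $\eta$ for the $A$-terms, and $\ep^4|\tr X|\le C\ep^4/\eta$ (since the semiconvexity/semiconcavity constants are $O(1/\eta)$). Combining yields $|\al v^\al-\al v^{\al,\ep}|\le C(\eta+\ep^4/\eta)$ uniformly in $\al$; sending $\al\to 0$ and optimizing $\eta\sim\ep^2$ gives $|c_H^\ep-c_H|\le C\ep^2$. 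Your alternative via the adjoint/invariant measure representation for $c_H^\ep$ is also a legitimate route and is closer to the method highlighted in Section~3 of this paper, but as you note it needs the extra input $\ep^4\int|D^2v^\ep|^2\,d\mu^\ep\le C$ (an analogue of Lemma~\ref{lem:est2}); without spelling that out, the mollification computation should be removed, since as written it is incorrect. Given that the paper itself simply defers to \cite{CGMT}, your closing plan to cite \cite[Proposition~2.2]{CGMT} for the $\ep^2$ rate is consistent with what the paper actually does.
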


We can prove the above proposition by using a similar argument to the proof of Proposition \ref{prop:e.CGMT}.
We now study the approximated ergodic problem with the penalized terms. 
\begin{prop}\label{prop:AEO}
For each $\ep \in (0,1)$, set $c^\ep:=\max\{0,c_H^\ep\}$. 
Then the approximated equation to {\rm(EO)} 
$$
{\rm (EO)_\ep} \quad
-\tr\big(A(x)D^2V^{\ep}\big)+H(x,DV^\ep)+\gam^{\ep}(V^\ep-\psi)
=\ep^4\Del V^\ep +c^\ep \quad \textnormal{in} \ \T^n. 
$$
has a  solution $V^\ep \in C^2(\T^n)$.
Moreover, 
\[
0\le c^\ep\le C\ep^2 \quad \text{and} \quad \|DV^\ep\|_{L^\infty(\T^n)} \leq C.
\] 
\end{prop}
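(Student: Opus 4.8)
The plan is to mirror the construction of $c_H^\ep$ and $v^\ep$ in Proposition \ref{prop.AE}, but now with the penalization term $\gam^\ep(\,\cdot-\psi)$ included, and then read off the two estimates from what we already know. First I would fix $\ep\in(0,1)$ and introduce a discount approximation: for $\mu>0$, let $V^{\ep,\mu}\in C^2(\T^n)$ be the unique classical solution of
\begin{equation}\notag
\mu V^{\ep,\mu}-\tr\big(A(x)D^2 V^{\ep,\mu}\big)+H(x,DV^{\ep,\mu})+\gam^\ep(V^{\ep,\mu}-\psi)=\ep^4\Del V^{\ep,\mu}\quad\text{in }\T^n,
\end{equation}
whose existence and uniqueness follow from the discount and the uniformly elliptic term $\ep^4\Del$ (Schauder theory plus the comparison principle). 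A barrier argument gives $|\mu V^{\ep,\mu}|\le C_\ep$: indeed a large positive constant $M$ satisfies $\mu M+\gam^\ep(M-\psi)\ge \max|H(x,0)|$ once $M$ is large, so it is a supersolution, and $V^{\ep,\mu}=0$-type lower barriers (a large negative constant, on which $\gam^\ep\equiv0$) give the bound from below. Hence $-\mu V^{\ep,\mu}$ stays in a compact set of $\R$.

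Next I would get a gradient bound $\|DV^{\ep,\mu}\|_{L^\infty(\T^n)}\le C$ with $C$ independent of $\mu$ (and, crucially, of $\ep$) by the Bernstein method exactly as in the proof of Proposition \ref{prop:e.CGMT}: set $\varphi=|DV^{\ep,\mu}|^2/2$, evaluate at an interior maximum $x_0$, and observe that the extra penalization contributes $(\gam^\ep)'(V^{\ep,\mu}-\psi)\,D(V^{\ep,\mu}-\psi)\cdot DV^{\ep,\mu}$, which at $x_0$ equals $(\gam^\ep)'(V^{\ep,\mu}-\psi)\big(|DV^{\ep,\mu}|^2-D\psi\cdot DV^{\ep,\mu}\big)=(\gam^\ep)'(V^{\ep,\mu}-\psi)\big(2\varphi(x_0)-D\psi\cdot DV^{\ep,\mu}\big)$; since $(\gam^\ep)'\ge0$ and $2\varphi(x_0)\ge|D\psi||DV^{\ep,\mu}|$ once $|DV^{\ep,\mu}(x_0)|\ge 2\|D\psi\|_\infty$, this term has a favorable sign and can simply be dropped. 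The remaining terms are handled verbatim as in Proposition \ref{prop:e.CGMT} using (H1), (H2), (H3) and the good terms $a^{ij}V^{\ep,\mu}_{x_ix_k}V^{\ep,\mu}_{x_jx_k}+\ep^4|D^2V^{\ep,\mu}|^2$. (Here I am using that the $\ep^4$ in front of $\Del$ plays the role of $\del$ in that proof, and the Bernstein constant is uniform in $\ep\in(0,1)$.) Then, setting $W^{\ep,\mu}:=V^{\ep,\mu}-V^{\ep,\mu}(0)$, the family $\{W^{\ep,\mu}\}_\mu$ is equi-Lipschitz, so along a subsequence $\mu\to0$ we have $W^{\ep,\mu}\to V^\ep$ in $C(\T^n)$ and $-\mu V^{\ep,\mu}\to \bar c^\ep$ for some constant $\bar c^\ep$, and $(V^\ep,\bar c^\ep)$ solves $-\tr(A D^2V^\ep)+H(x,DV^\ep)+\gam^\ep(V^\ep-\psi)=\ep^4\Del V^\ep+\bar c^\ep$ in the viscosity sense; by elliptic regularity $V^\ep\in C^2(\T^n)$. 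The Lipschitz bound passes to the limit, giving $\|DV^\ep\|_{L^\infty(\T^n)}\le C$ uniformly in $\ep$.

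It remains to identify $\bar c^\ep=c^\ep=\max\{0,c_H^\ep\}$ and to prove $0\le c^\ep\le C\ep^2$; I expect this identification to be the main point of the argument. For the lower bound $\bar c^\ep\ge0$: integrating the equation for $V^{\ep,\mu}$ against the invariant measure of the adjoint operator (or, more elementarily, evaluating at a minimum point of $V^{\ep,\mu}$ where $DV^{\ep,\mu}=0$, $D^2V^{\ep,\mu}\ge0$, $\gam^\ep\ge0$) shows $\mu V^{\ep,\mu}(x_{\min})\ge H(x_{\min},0)-$, and more directly a constant subsolution argument gives $\bar c^\ep\ge0$; combined with the fact that $\gam^\ep\ge0$ forces $\bar c^\ep\ge c_H^\ep-$(error), one gets $\bar c^\ep\ge\max\{0,c_H^\ep\}$ up to signs that I will make precise. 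For the upper bound: when $c_H^\ep\le0$, take a solution $v^\ep$ of (E)$_\ep$ and a large constant $C$ so that $v^\ep-C\le\psi$; then $\gam^\ep(v^\ep-C-\psi)=0$, so $v^\ep-C$ is a subsolution of (EO)$_\ep$ with right-hand side $c_H^\ep\le0$, forcing $\bar c^\ep\le0$, hence $\bar c^\ep=0=c^\ep$. When $0<c_H^\ep\le C\ep^2$, the same $v^\ep-C$ is a subsolution with right-hand side $c_H^\ep$, giving $\bar c^\ep\le c_H^\ep$; and the barrier $\psi$ itself, being a supersolution of the unpenalized equation, shows $\bar c^\ep\le\max_x\{|\tr(AD^2\psi)|+|H(x,D\psi)|+\ep^4|\Del\psi|\}$, while on the other side $\bar c^\ep\ge c_H^\ep$ by dropping the nonnegative penalization, so $\bar c^\ep=c_H^\ep=c^\ep$. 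In all cases $0\le c^\ep\le\max\{0,c_H^\ep\}\le C\ep^2$ by Proposition \ref{prop.AE}. The delicate part is being careful with the order of limits $\mu\to0$ versus the sign bookkeeping in the comparison arguments, since $\gam^\ep$ is only $C^1$ and the ergodic constant is only defined up to the usual uniqueness statement; all of this is handled by the comparison principle for (EO)$_\ep$, which holds because of the uniformly elliptic term $\ep^4\Del$ and the monotonicity of $r\mapsto\gam^\ep(r)$.
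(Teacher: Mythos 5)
Your approach is correct but takes a genuinely different route from the paper's. The paper's proof is very short: since $c^\ep$ is \emph{defined} as $\max\{0,c_H^\ep\}$, there is no constant to derive or identify --- the authors simply observe that with this particular constant, $v^\ep-C$ (once $C$ is large enough that $v^\ep-C\le\psi$, so the penalization vanishes there) is a subsolution of ${\rm(EO)_\ep}$ because $c_H^\ep-c^\ep\le0$, and that $\psi+C\ep^{1/2}$ is a supersolution because $\gam^\ep(C\ep^{1/2})=C^2/2$ dominates the other terms; Perron's method then yields $V^\ep$, and $0\le c^\ep\le C\ep^2$ is immediate from Proposition~\ref{prop.AE} together with $c_H\le 0$. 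You instead run a vanishing-discount scheme and try to prove that the discounted constant converges to $\max\{0,c_H^\ep\}$; this can be made to work, but the identification step --- which you flag as ``the main point'' --- is exactly what the paper sidesteps by fixing $c^\ep$ in advance, and your write-up of it is the shakiest part (phrases like ``up to signs that I will make precise'' and an argument that gives only $\mu V^{\ep,\mu}\le C$ rather than $\mu V^{\ep,\mu}\le o(1)$ at the maximum point). To close the lower bound $\bar c^\ep\ge 0$ cleanly one should use the comparison $V^{\ep,\mu}\le\psi+C\ep^{1/2}$ and then $\mu V^{\ep,\mu}\le C\mu\to0$, rather than the max-point computation you sketch; and to close the upper bound $\bar c^\ep\le c_H^\ep$ (when $c_H^\ep>0$) one should compare against $v^\ep-C-c_H^\ep/\mu$, which is a subsolution of the \emph{discounted} equation, rather than invoking a comparison for the ergodic equation directly (the penalized ergodic problem need not have a unique constant, so ``$\bar c^\ep$ is the ergodic constant'' is not a priori well-defined). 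Your Bernstein observation --- that at an interior maximum of $|DV^{\ep,\mu}|^2/2$, once $|DV^{\ep,\mu}|\ge\|D\psi\|_\infty$ the penalization term $(\gam^\ep)'(V^{\ep,\mu}-\psi)\,D(V^{\ep,\mu}-\psi)\cdot DV^{\ep,\mu}$ is nonnegative and can be discarded --- is correct, and is in fact the observation the paper leaves implicit for the gradient bound. What the discount approach buys you is a constructive route to a $C^2$ solution together with the gradient bound in one pass; what the paper's Perron route buys is brevity, at the cost of leaving the regularity and the Bernstein step to the reader.
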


\begin{proof}
 Pick $v^\ep$ to be a solution of (E)$_\ep$.
One could see that $v^\ep-C$ and $\psi+C\ep^{1/2}$ are respectively a subsolution and a supersolution of (EO)$_\ep$ for $C>0$ sufficiently large. We then apply the Perron method to achieve the existence of a solution $V^\ep$ of (EO)$_\ep$. 
Noting that we are assuming $c_H\le0$, we easily see $c^\ep\to0$ as $\ep\to0$. 
\end{proof}


\subsection{Stability Result and Proof of Main Theorem}
As observed in \cite{CGMT},  Theorem \ref{thm:main} could be obtained easily
as a corollary of the following key stability result: 

\begin{thm}\label{thm:key}
We have
$$
\lim_{\ep \to 0}\ep \|w^\ep_t(\cdot,1)\|_{L^\infty(\T^n)} = 0.
$$
\end{thm}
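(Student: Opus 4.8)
\textbf{Proof proposal for Theorem \ref{thm:key}.}

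The plan is to follow the nonlinear adjoint method exactly as in \cite{CGMT}, adapting it to absorb the extra penalization term $\gam^\ep(w^\ep-\psi)$ that appears in (A)$_\ep$ but not in the purely Cauchy problem of \cite{CGMT}. First I would differentiate the equation (A)$_\ep$ in time, obtaining that $z^\ep:=\ep w^\ep_t$ satisfies $\cL^\ep[z^\ep/\ep]=0$ wherever one can differentiate $\gam^\ep$, more precisely a linearized equation of the form $\ep z^\ep_t - a^{ij}z^\ep_{x_ix_j} + D_pH(x,Dw^\ep)\cdot Dz^\ep + (\gam^\ep)'(w^\ep-\psi)z^\ep = \ep^4\Del z^\ep$. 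Pairing this against the adjoint solution $\sig^\ep$ of (AJ)$_\ep$ with Dirac mass at the point $x_0$ realizing $\|w^\ep_t(\cdot,1)\|_{L^\infty}$, and integrating by parts on $\T^n\times[0,1]$, I obtain the representation
\[
\ep w^\ep_t(x_0,1) = \int_{\T^n} \ep w^\ep_t(x,0)\,\sig^\ep(x,0)\,dx.
\]
So everything reduces to showing that the right-hand side tends to $0$ as $\ep\to0$; since $|\ep w^\ep_t(\cdot,0)|$ is controlled by the equation at $t=0$ (i.e. by $\tr(A D^2 u_0) - H(x,Du_0) - \gam^\ep(u_0-\psi) + \ep^4\Del(\cdot)$, which need not be bounded a priori because $u_0$ is only Lipschitz), one must be careful; the cleaner route, as in \cite{CGMT}, is to instead split $[0,1]$ and use the averaging/conservation identities of Proposition \ref{prop:element} together with an energy estimate to show that the mass of $\sig^\ep$ near $t=0$ is small, or equivalently that $\ep w^\ep_t$ integrated against $\sig^\ep$ over a time layer is small.

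The core of the argument is a conservation-of-energy computation. Define the ``energy'' $E^\ep(t) := \int_{\T^n}\big(\tfrac12 a^{ij}w^\ep_{x_i}w^\ep_{x_j} - \text{(something)} + \gam^\ep(w^\ep-\psi) + \ep w^\ep H(\cdot,Dw^\ep)+\cdots\big)\sig^\ep\,dx$ following the precise recipe of \cite[Section 3]{CGMT}, augmented by the penalization contribution. Differentiating $E^\ep$ in $t$ and using both (A)$_\ep$ and (AJ)$_\ep$, most terms telescope and one is left with $\int_0^1 (E^\ep)'(t)\,dt = E^\ep(1)-E^\ep(0)$ controlling $\ep\iint (w^\ep_t)^2\sig^\ep\,dx\,dt$ up to error terms. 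The new error terms are exactly of the form $\iint (\gam^\ep)'(w^\ep-\psi)\,w^\ep_t\,\sig^\ep\,dx\,dt$ and $\iint \gam^\ep$-type quantities; here I would invoke Proposition \ref{prop:element}(ii), which gives $\iint (\gam^\ep)'(w^\ep-\psi)\sig^\ep\,dx\,dt\le\ep$, and Proposition \ref{prop:est1}(i) bounding $\gam^\ep$ itself, to show these contribute $O(\ep^{1/2})$ or better. Combined with Lemma \ref{lem:est2} (the $\sig^\ep$-weighted Hessian bound) and the comparison between $c_H^\ep$, $c^\ep$ and $c_H$ from Propositions \ref{prop.AE}--\ref{prop:AEO}, this yields $\ep\iint_{\T^n\times[0,1]}(w^\ep_t)^2\,\sig^\ep\,dx\,dt \to 0$.

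From the $L^2(\sig^\ep)$-smallness of $w^\ep_t$ one upgrades to the $L^\infty$ bound on $\ep w^\ep_t(\cdot,1)$ via the representation formula above together with a Cauchy--Schwarz argument on time layers: for the layer $[1-h,1]$ one writes $\ep w^\ep_t(x_0,1)$ as a boundary term plus an integral of $\cL^\ep[w^\ep_t]$ against $\sig^\ep$ over that layer (the $\cL^\ep$ term vanishes in the interior of the layer as above), and then uses $\iint (w^\ep_t)^2\sig^\ep$ small plus $\iint\sig^\ep\le1$; for the complementary part $[0,1-h]$ one uses that $\sig^\ep(x,1)=\del_{x_0}$ has propagated, i.e. Proposition \ref{prop:element}(i)--(iii) forces the mass of $\sig^\ep$ to concentrate and the contribution to decay. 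I expect the main obstacle to be precisely the control of the gradient of the penalization, $(\gam^\ep)'(w^\ep-\psi)Dw^\ep$, appearing when one differentiates (A)$_\ep$ spatially inside the energy identity: as the authors warn in the introduction, this derivative term is ``much more dangerous'' than $\gam^\ep$ itself, and it is exactly here that the scaling $\del=\ep^2$ and $\gam^\del(r)=r_+^2/(2\del^{1/2})$ must be used — the factor $\del^{1/2}=\ep$ in the denominator is what keeps $(\gam^\ep)'(w^\ep-\psi) = \ep^{-1}(w^\ep-\psi)_+/\ep^{1/2}\cdot\ep^{1/2}$ integrable against $\sig^\ep$ with the right power. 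The delicate part, carried out in Lemmas \ref{lem:key1} and \ref{lem:key2}, is to show that on the support of $\sig^\ep$ one has the improved bound $(w^\ep-\psi)_+\le C\ep^{\alpha}$ for a suitable $\alpha>0$, so that both the penalization and its derivative are negligible there; everything else is a routine adaptation of \cite{CGMT}.
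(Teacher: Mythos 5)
You have the right ingredients in mind (the nonlinear adjoint method, conservation of energy, the scaling of the penalization), and your representation formula $\ep w^\ep_t(x_0,1)=\int_{\T^n}\ep w^\ep_t(x,0)\,\sig^\ep(x,0)\,dx$ is a correct consequence of Lemma \ref{lem-1}(i); you are also right that it cannot be used directly because $w^\ep_t(\cdot,0)$ is not controlled. But at this point you abandon the thread that actually closes the argument, and what you propose instead has a genuine gap.

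The missing idea is the comparison with the approximated ergodic solution. Lemma \ref{lem-1}(ii) gives the time-averaged version
\[
\ep w^\ep_t(x_0,1)=\iint_{\T^n\times[0,1]}\big[a^{ij}w^\ep_{x_ix_j}+\ep^4\Del w^\ep-H(x,Dw^\ep)-\gam^\ep(w^\ep-\psi)\big]\sig^\ep\,dx\,dt,
\]
and the crucial next step is to subtract the stationary equation $(\mathrm{EO})_\ep$ satisfied by $V^\ep$ (Proposition \ref{prop:AEO}), which replaces the integrand by the same expression written for $W^\ep=V^\ep-w^\ep$ plus the small constant $c^\ep$. It is precisely the difference $W^\ep$, and not $w^\ep$ or $w^\ep_t$ itself, that Lemmas \ref{lem:key1} and \ref{lem:key2} control on the support of $\sig^\ep$: they give $\iint|DW^\ep|^2\sig^\ep\le C\ep$, $\iint|a^{ij}W^\ep_{x_ix_j}|^2\sig^\ep\le C\sqrt{\ep}$, $\iint\gam^\ep\sig^\ep\le C\ep$, and $\ep^7\iint|D^2W^\ep|^2\sig^\ep\le C$, from which Cauchy--Schwarz together with $\iint\sig^\ep\le 1$ yields $\ep\|w^\ep_t(\cdot,1)\|_{L^\infty}\le C\ep^{1/4}$. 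Your proposal never makes this substitution, and without it the integrand of Lemma \ref{lem-1}(ii) has no reason to be small.

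The alternative route you sketch — an energy estimate bounding $\ep\iint(w^\ep_t)^2\sig^\ep$ followed by an ``$L^2\to L^\infty$ upgrade'' on time layers — is not what the paper does and is unlikely to work: the conclusion of the theorem only gives $|w^\ep_t(x_0,1)|\lesssim\ep^{-3/4}$, so $w^\ep_t$ itself is not small and $\ep\iint(w^\ep_t)^2\sig^\ep$ has no reason to vanish; moreover, no such upgrade step is needed once the ergodic comparison is in place. Finally, your characterization of Lemmas \ref{lem:key1} and \ref{lem:key2} as producing a pointwise bound $(w^\ep-\psi)_+\le C\ep^\alpha$ on $\supp\sig^\ep$ is not accurate: what they produce are integral estimates against $\sig^\ep$ on the difference $W^\ep$ and on $\gam^\ep$, obtained by subtracting the two equations and Bernstein-type computations, not a pointwise smallness of the obstacle contact set.
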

We first recall the proof of Theorem \ref{thm:main} for self-containedness 
and postpone the proof of Theorem \ref{thm:key} to the next subsection as it involves many technical issues. 

\begin{proof}[Proof of Theorem {\rm\ref{thm:main}} in case {\rm(ii)}]
We use Proposition \ref{prop:est1} to yield the existence of a sequence $\{\ep_m\} \to 0$ so that $w^{\ep_m}(\cdot,1)$ converges uniformly to a function $V\in C(\T^n)$. In view of Theorem \ref{thm:key}, $V$ is a solution of (EO) and thus a time-independent solution of the equation in (C) and (C)$_\ep$. Set $t_m=\ep_m^{-1}$ and use Proposition \ref{prop:stability} to achieve that $u(\cdot,t_m)$ converges uniformly to $V$ in $\T^n$. 

We show that $u(\cdot,t)$ converges uniformly to $V$ as $t\to \infty$. 
For any $t>0$, we pick $m\in \N$ so that $t_m \leq t <t_{m+1}$ and
use the comparison principle to deduce 
$$
\|u(\cdot,t)-V\|_{L^\infty(\T^n)} =\|u(\cdot,t_m+(t-t_m))-V\|_{L^\infty(\T^n)}
\leq \|u(\cdot,t_m)-V\|_{L^\infty(\T^n)}.
$$
Let $m\to \infty$ in the above to yield the desired result.
\end{proof}


\subsection{Key Estimates}

The following three Lemmas provide the key ingredients to establish
Theorem \ref{thm:key}.  

\begin{lem}[Conservation of Energy] \label{lem-1}
The followings hold:
\begin{itemize}
\item[(i)]
$\displaystyle \dfrac{d}{dt} \int_{\T^n} 
\big[a^{ij}w^{\ep}_{x_ix_j}+\ep^4\Del w^\ep-H(x,Dw^\ep)-\gam^{\ep}(w^\ep-\psi)\big]\sig^\ep \,dx=0,$ \\
\item[(ii)]
$\displaystyle 
\ep w^\ep_t(x_0,1)=
\iint_{\T^n\times[0,1]} 
\big[a^{ij}w^{\ep}_{x_ix_j}+\ep^4\Del w^\ep
-H(x,Dw^\ep)-\gam^{\ep}(w^\ep-\psi)\big]\sig^\ep \,dx\,dt.$
\end{itemize}
\end{lem}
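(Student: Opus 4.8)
The statement is a ``conservation of energy'' identity in the spirit of \cite{CGMT}, adapted to the penalized equation (A)$_\ep$. The plan is to differentiate the integral in (i) in time, use the equation (A)$_\ep$ to rewrite $\ep w^\ep_t$, use the adjoint equation (AJ)$_\ep$ for $\sig^\ep$, and then integrate by parts so that all the spatial-derivative terms cancel, leaving a total time derivative that turns out to be zero. Part (ii) is then an immediate consequence of (i): integrate (i) over $t\in[0,1]$, use that the integral is constant in $t$, evaluate at $t=1$ where $\sig^\ep(\cdot,1)=\del_{x_0}$, and subtract the value at $t=0$.

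More precisely, for (i) I would argue as follows. Write $E(x,t):=a^{ij}w^\ep_{x_ix_j}+\ep^4\Del w^\ep-H(x,Dw^\ep)-\gam^\ep(w^\ep-\psi)$, so that from the equation (A)$_\ep$ we have $E=-\ep w^\ep_t$, i.e., $E=-\ep w^\ep_t$ pointwise. Then
\[
\frac{d}{dt}\int_{\T^n} E\,\sig^\ep\,dx
=\int_{\T^n} E_t\,\sig^\ep\,dx+\int_{\T^n} E\,\sig^\ep_t\,dx.
\]
For the first term, differentiate the equation (A)$_\ep$ in $t$: since $E=-\ep w^\ep_t$ and $\ep w^\ep_t$ solves the linearized equation $\cL^\ep[\ep w^\ep_t]=0$ (differentiate (A)$_\ep$ in $t$, noting the coefficients depend on $t$ only through $w^\ep$ and its gradient, and $D_xH$, $a^{ij}$, $\psi$ are $t$-independent), we get $\cL^\ep[E]=0$, that is $\ep E_t-a^{ij}E_{x_ix_j}+D_pH(x,Dw^\ep)\cdot DE+(\gam^\ep)'(w^\ep-\psi)E=\ep^4\Del E$. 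Hence
\[
\int_{\T^n}E_t\,\sig^\ep\,dx
=\frac1\ep\int_{\T^n}\big(a^{ij}E_{x_ix_j}-D_pH\cdot DE-(\gam^\ep)'(w^\ep-\psi)E+\ep^4\Del E\big)\sig^\ep\,dx.
\]
Now integrate by parts in $x$ (no boundary terms on $\T^n$) to move all derivatives off $E$ and onto $\sig^\ep$: this produces exactly $\frac1\ep\int_{\T^n}E\big((a^{ij}\sig^\ep)_{x_ix_j}+\text{div}(D_pH\,\sig^\ep)-(\gam^\ep)'(w^\ep-\psi)\sig^\ep+\ep^4\Del\sig^\ep\big)\,dx$. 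But the adjoint equation (AJ)$_\ep$ says precisely that $(a^{ij}\sig^\ep)_{x_ix_j}+\text{div}(D_pH\,\sig^\ep)-(\gam^\ep)'(w^\ep-\psi)\sig^\ep+\ep^4\Del\sig^\ep=-\ep\sig^\ep_t$, so $\int_{\T^n}E_t\,\sig^\ep\,dx=-\int_{\T^n}E\,\sig^\ep_t\,dx$, and the two terms cancel, giving (i).

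For (ii), integrate the identity $\frac{d}{dt}\int_{\T^n}E\,\sig^\ep\,dx=0$ over $t\in[0,1]$ twice, or rather: since the quantity $t\mapsto\int_{\T^n}E(\cdot,t)\sig^\ep(\cdot,t)\,dx$ is constant on $[0,1]$, it equals its value at $t=1$, which by $\sig^\ep(\cdot,1)=\del_{x_0}$ is $E(x_0,1)=-\ep w^\ep_t(x_0,1)$. On the other hand, integrating the constant over $[0,1]$ gives $\iint_{\T^n\times[0,1]}E\,\sig^\ep\,dx\,dt$, which by the same constancy equals $E(x_0,1)=-\ep w^\ep_t(x_0,1)$ as well; rearranging signs and recalling the definition of $E$ yields exactly the displayed formula in (ii). The only point requiring a little care is justifying the integration by parts and the time-differentiation of (A)$_\ep$ at the level of classical solutions, which is fine here since $w^\ep$ is smooth by Proposition \ref{prop:est1} and $\sig^\ep$ is a smooth (for $t<1$) nonnegative solution of (AJ)$_\ep$ with the Dirac terminal data handled by the standard duality/approximation argument; I do not expect any serious obstacle, this being a direct adaptation of the corresponding computation in \cite{CGMT} with the extra penalization term $\gam^\ep(w^\ep-\psi)$ carried along, which is harmless because $(\gam^\ep)'(w^\ep-\psi)$ is exactly the zeroth-order coefficient appearing symmetrically in both $\cL^\ep$ and its adjoint.
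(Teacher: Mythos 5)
Your argument is the same as the paper's: the key observation is that $w^\ep_t$ solves the linearized equation $\cL^\ep[w^\ep_t]=0$, pairing against the adjoint solution $\sig^\ep$ and integrating by parts yields (i), and (ii) follows by integrating the constant quantity $t\mapsto\int_{\T^n}E\,\sig^\ep\,dx$ over $[0,1]$ and evaluating at $t=1$ where $\sig^\ep(\cdot,1)=\del_{x_0}$.

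One slip, though: from (A)$_\ep$ you have $\ep w^\ep_t = a^{ij}w^\ep_{x_ix_j}+\ep^4\Del w^\ep - H(x,Dw^\ep)-\gam^\ep(w^\ep-\psi)$, so $E=+\ep w^\ep_t$, not $E=-\ep w^\ep_t$ as you wrote. The sign is irrelevant for (i) since $\cL^\ep$ is linear, but if you carried $E=-\ep w^\ep_t$ through to (ii) you would land on $\ep w^\ep_t(x_0,1)=-\iint E\,\sig^\ep\,dx\,dt$, contradicting the stated identity. You hint at this with ``rearranging signs,'' but the cleaner fix is simply to use the correct relation $E=\ep w^\ep_t$, after which (ii) comes out with no sign juggling.
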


\begin{proof}
We observe that $w^\ep_t$ solves the linearized equation, 
i.e., $\cL^{\ep}[w^\ep_t]=0$. 
Multiply this by $\sig^\ep$ and integrate over $\T^n$ to deduce 
$$
\frac{d}{dt} \int_{\T^n} \ep w^\ep_t \sig^\ep\,dx=0,
$$
which is precisely (i). Integrate (i) over $[0,1]$ with respect to $t$ to achieve (ii).
\end{proof}

We hereafter set $W^{\ep}(x,t):=V^{\ep}(x)-w^{\ep}(x,t)$ for all $(x,t)\in\T^n\times[0,1]$, 
where $V^{\ep},w^\ep$ are a solution of (EO)$_\ep$ and (A)$_\ep$, respectively.   

\begin{lem}[Key Estimates 1] \label{lem:key1}
There exists a positive constant $C$, independent of $\ep$, such that
the followings hold{\rm:}
\begin{itemize}
\item[(i)]
$\displaystyle
\iint_{\T^n\times[0,1]} 
|DW^\ep|^2\sig^{\ep}\,dx\,dt
\le C\ep,$
\item[(ii)]
$\displaystyle
\iint_{\T^n\times[0,1]}
\big(\gam^{\ep}(V^\ep-\psi)+\gam^{\ep}(w^\ep-\psi)\big)\sig^{\ep}\,dxdt
\le C\ep.$ 
\end{itemize}
\end{lem}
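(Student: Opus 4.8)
The plan is to test the equations satisfied by $w^\ep$ and $V^\ep$ against the adjoint density $\sig^\ep$ and combine them so that the diffusion and Hamiltonian terms produce a coercive quadratic in $DW^\ep$, paying for the discrepancy with the quantities already controlled in Proposition \ref{prop:element} and Lemma \ref{lem:est2}. First I would note that $W^\ep=V^\ep-w^\ep$ satisfies, pointwise on $\T^n\times[0,1]$,
\begin{multline*}
-\ep w^\ep_t -a^{ij}W^\ep_{x_ix_j}+\big(H(x,DV^\ep)-H(x,Dw^\ep)\big)\\
+\big(\gam^{\ep}(V^\ep-\psi)'-\gam^{\ep}(w^\ep-\psi)'\big)\cdot(\text{something})
=\ep^4\Del W^\ep +\text{(lower order)} ,
\end{multline*}
so the honest thing is to subtract the equation $(A)_\ep$ for $w^\ep$ from $(EO)_\ep$ for $V^\ep$; the leftover time derivative $\ep w^\ep_t$ is harmless because, after integrating against $\sig^\ep$ over $\T^n\times[0,1]$, Lemma \ref{lem-1}(ii) together with the a priori bounds converts it into an $O(1)$ contribution — and in fact the point of Theorem \ref{thm:key} is that it is $o(1)$, but for the present lemma an $O(\ep)$ bound after one more integration in $t$ is what we want. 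I would also use the constant gap $c^\ep=\max\{0,c_H^\ep\}$ with $0\le c^\ep\le C\ep^2$ from Proposition \ref{prop:AEO}, which contributes only $O(\ep^2)$.

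The engine for part (i) is strong convexity: by (H1), $H(x,DV^\ep)-H(x,Dw^\ep)-D_pH(x,Dw^\ep)\cdot DW^\ep \ge \theta |DW^\ep|^2$. Thus, testing the difference of the two equations against $\sig^\ep$ and recognizing $\ep (w^\ep)_t - a^{ij}W^\ep_{x_ix_j}+D_pH(x,Dw^\ep)\cdot DW^\ep + (\gam^\ep)'(w^\ep-\psi)W^\ep - \ep^4\Del W^\ep$ as $\cL^\ep[W^\ep]$ up to the $w^\ep_t$ term, integration by parts moves $\cL^\ep$ onto $\sig^\ep$, which solves the adjoint equation; the terms that survive are exactly $\iint \theta|DW^\ep|^2\sig^\ep$ on one side, and on the other side the integral of $(\gam^\ep)'(w^\ep-\psi)W^\ep$ minus the analogous penalization difference, the constant $c^\ep$, and the boundary/initial contributions from $\sig^\ep(\cdot,1)=\del_{x_0}$ and $\sig^\ep(\cdot,0)$. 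The boundary terms give $\ep\big(W^\ep(x_0,1)-\int W^\ep(\cdot,0)\sig^\ep(\cdot,0)\big)$, which is $O(\ep)$ by Proposition \ref{prop:est1}(ii) and Proposition \ref{prop.AE}. The penalization term $\iint (\gam^\ep)'(w^\ep-\psi)\sig^\ep$ is $\le \ep$ by Proposition \ref{prop:element}(ii), and $|W^\ep|\le C$, so $\iint (\gam^\ep)'(w^\ep-\psi)|W^\ep|\sig^\ep\le C\ep$. This gives (i).

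For part (ii) the key monotonicity is that $\gam$ is convex with $\gam(0)=0$, hence $\gam^\ep(r)\le (\gam^\ep)'(r)\,r$ for all $r$, and also $\gam^\ep(a)-\gam^\ep(b)\le (\gam^\ep)'(a)(a-b)$. Applying this with $a=V^\ep-\psi$, $b=w^\ep-\psi$ (and its mirror) one bounds $\gam^\ep(V^\ep-\psi)+\gam^\ep(w^\ep-\psi)$ by a combination of $(\gam^\ep)'(w^\ep-\psi)W^\ep$, $(\gam^\ep)'(V^\ep-\psi)W^\ep$, plus $\gam^\ep(w^\ep-\psi)+\gam^\ep(V^\ep-\psi)$ weighted terms that can be absorbed; after integrating against $\sig^\ep$, the term $\iint (\gam^\ep)'(w^\ep-\psi)\sig^\ep\le\ep$ controls one piece, and the other piece $\iint (\gam^\ep)'(V^\ep-\psi)\sig^\ep$ must itself be shown to be $O(\ep)$ — this I would get by testing the $(EO)_\ep$ equation against $\sig^\ep$ and using the already-established bounds on $H$, $D^2$-quantities (Lemma \ref{lem:est2}), and the total mass $\iint\sig^\ep\le 1$. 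The main obstacle is precisely this last bookkeeping: isolating $\iint(\gam^\ep)'(V^\ep-\psi)\sig^\ep$ cleanly, since $V^\ep$ solves a time-independent equation while $\sig^\ep$ is time-dependent, so one has to integrate the stationary equation in $t$ as well and be careful that no derivative of the penalization (which is the genuinely dangerous object flagged in the introduction) is left uncontrolled — here the scaling $\del=\ep^2$, giving $\gam^\ep(r)=r_+^2/(2\ep)$ and $(\gam^\ep)'(r)=r_+/\ep$, is exactly what makes the estimates close.
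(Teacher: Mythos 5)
Your approach to part (i) is essentially the paper's: subtract ${\rm(A)}_\ep$ from ${\rm(EO)}_\ep$, use the uniform convexity of $H$ to produce $\theta|DW^\ep|^2$, test against $\sig^\ep$, and absorb the rest. Two remarks, though. First, the invocation of Lemma \ref{lem-1}(ii) for the time-derivative is not the right mechanism; what actually kills $\iint \cL^\ep[W^\ep]\sig^\ep$ is the integration by parts against the adjoint equation ${\rm(AJ)}_\ep$, which leaves only the $O(\ep)$ boundary term $\ep\int_{\T^n} W^\ep\sig^\ep\,dx\big|_{t=0}^{t=1}$ (bounded by Proposition \ref{prop:est1}(ii), Proposition \ref{prop.AE} and $\iint\sig^\ep\le1$). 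Second, and more importantly, you use only plain convexity of $\gam$; the paper uses the strengthened inequality
\[
\gam(r)-\gam(s)\ge\gam'(s)(r-s)+\tfrac12|r_{+}-s_{+}|^2,
\]
which, after the same subtraction and testing, yields the extra estimate
\[
\iint_{\T^n\times[0,1]}\Big|\frac{(V^{\ep}-\psi)_{+}-(w^{\ep}-\psi)_{+}}{\ep^{1/2}}\Big|^2\sig^{\ep}\,dx\,dt\le C\ep
\]
alongside $\iint|DW^\ep|^2\sig^\ep\le C\ep$. Without this refinement you cannot do part (ii).

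Your proposed route for (ii) has a genuine gap. You reduce to bounding $\iint(\gam^{\ep})'(V^\ep-\psi)\sig^{\ep}$, but this quantity is not accessible: Proposition \ref{prop:element}(ii) controls $\iint(\gam^{\ep})'(w^\ep-\psi)\sig^{\ep}\le\ep$ precisely because $(\gam^{\ep})'(w^\ep-\psi)$ is the zeroth-order coefficient in the adjoint equation ${\rm(AJ)}_\ep$, and no analogous identity exists with $V^\ep$ in place of $w^\ep$. Your fallback of ``testing ${\rm(EO)}_\ep$ against $\sig^\ep$'' fails for two reasons: it produces $\iint\gam^\ep(V^\ep-\psi)\sig^\ep$, not its derivative; and the second-order terms $a^{ij}V^\ep_{x_ix_j}$ and $\ep^4\Del V^\ep$ cannot be bounded, since there is no a priori control on $D^2V^\ep$ and integrating by parts onto $\sig^\ep$ produces uncontrolled derivatives of $\sig^\ep$. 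Even granting an $O(1)$ bound, it would only give $\iint\gam^\ep(V^\ep-\psi)\sig^\ep\lesssim\ep^{1/2}$ via $(V^\ep-\psi)_+\le C\ep^{1/2}$, not $C\ep$. The paper's path is different: first $\iint\gam^\ep(w^\ep-\psi)\sig^\ep\le C\ep^{3/2}$ directly from the explicit quadratic form of $\gam^\ep$, Proposition \ref{prop:est1}(i) and Proposition \ref{prop:element}(ii); then the triangle inequality $(V^\ep-\psi)_+^2\le 2(w^\ep-\psi)_+^2+2|(V^\ep-\psi)_+-(w^\ep-\psi)_+|^2$ together with the refined estimate from (i) transfers the bound to $\gam^\ep(V^\ep-\psi)$. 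The missing refined inequality is exactly what you need to close the argument.
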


\begin{lem}[Key Estimates 2] \label{lem:key2}
There exists a positive constant $C$, independent of $\ep$, such that
the followings hold{\rm:}
\begin{itemize}
\item[(i)]
$\displaystyle
\iint_{\T^n\times[0,1]} 
\ep^7|D^2W^\ep|^2 \sig^\ep\,dx\,dt
\le C,$
\item[(ii)]
$\displaystyle
\iint_{\T^n\times[0,1]}
a^{ij}(x) a^{ll}(x) W^\ep_{x_i x_k}  W^\ep_{x_j x_k}
\sig^{\ep}\,dxdt\le C\sqrt{\ep},$  
\item[(iii)]
$\displaystyle
\iint_{\T^n\times[0,1]}
\big|a^{ij}(x)W^\ep_{x_i x_j}\big|^2\sig^{\ep}\,dxdt\le C\sqrt{\ep}$.  
\end{itemize}
\end{lem}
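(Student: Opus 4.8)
The plan is to mimic the Bernstein-type computation used in Lemma \ref{lem:est2}, but now applied to the \emph{difference} $W^\ep = V^\ep - w^\ep$ rather than to $w^\ep$ itself, and weighted against the adjoint density $\sig^\ep$. First I would write down the equation satisfied by $W^\ep$. Since $V^\ep$ solves (EO)$_\ep$ and $w^\ep$ solves (A)$_\ep$, subtracting and using the convexity of $H$ in $p$ (hypothesis (H1), $D^2_{pp}H \ge 2\theta I_n$) gives a useful one-sided inequality: on the region where the obstacle is inactive for $V^\ep$ one controls $-\tr(A D^2 W^\ep) + D_pH(x,Dw^\ep)\cdot DW^\ep - \ep^4\Del W^\ep + \theta|DW^\ep|^2$ from above by $\ep w^\ep_t + (\gam^\ep)'(w^\ep-\psi)\,w^\ep$-type terms plus $c^\ep$, while the penalization terms have a favourable sign because $\gam$ is convex and nondecreasing. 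The point is that $\ep w^\ep_t = \cL^\ep[\cdot]$-compatible, so after multiplying by $\sig^\ep$ and integrating over $\T^n\times[0,1]$ the drift and the time-derivative terms integrate away against the adjoint equation, leaving the good term $\theta\iint |DW^\ep|^2\sig^\ep$ bounded by $C\ep$ — this is exactly part (i) of Lemma \ref{lem:key1}, and I would either invoke it directly or re-derive it, then use part (ii) of Lemma \ref{lem:key1} for the penalization integrals.

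For part (i) of Lemma \ref{lem:key2}, I would differentiate the equation for $W^\ep$ (or more precisely run the Bernstein argument on $\varphi := |DW^\ep|^2/2$, exactly as in Lemma \ref{lem:est2}), producing the linearized operator $\cL^\ep$ acting on $\varphi$ together with the good terms $a^{ij}W^\ep_{x_ix_k}W^\ep_{x_jx_k} + \ep^4|D^2W^\ep|^2$ on one side. After multiplying by $\sig^\ep$ and integrating by parts, the left side becomes the desired Hessian integral and the right side collects the zeroth-order Hamiltonian remainder $D_xH\cdot DW^\ep$-type terms (controlled by (H2) and the gradient bounds from Propositions \ref{prop:est1}, \ref{prop.AE}, \ref{prop:AEO}), the second-derivative cross terms handled via the trace inequality \eqref{SV-a2} as in \eqref{Bern-1}, and the penalization and its derivative. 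The crucial bookkeeping is on the powers of $\ep$: we have $\del = \ep^2$ here, so $\del^2 = \ep^4$ is the viscosity coefficient, and the various error terms carry factors like $\ep^{-1}$ (from $\ep w^\ep_t$, cf. Lemma \ref{lem:est3}) and $\ep^{-3/4}$-type contributions from the penalization derivative $(\gam^\ep)' $ evaluated on $\supp\sig^\ep$. Multiplying through by the appropriate power, one lands on the weight $\ep^7|D^2W^\ep|^2$ and a bound of $O(1)$; part (ii) then follows by interpolating this $O(1)$ bound with the sharper $O(\ep)$ gradient bound (i) of Lemma \ref{lem:key1}, using Young's inequality to split $a^{ij}a^{ll}W^\ep_{x_ix_k}W^\ep_{x_jx_k}\sig^\ep$, and part (iii) is then immediate from (ii) since $|a^{ij}W^\ep_{x_ix_j}|^2 \le C\, a^{ij}a^{ll}W^\ep_{x_ix_k}W^\ep_{x_jx_k}$ by the same matrix-algebra manipulation as in \eqref{Bern-2}.

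The main obstacle, as the authors flag in the introduction, is controlling the penalization \emph{derivative} term $(\gam^\ep)'(w^\ep-\psi)$ — and the analogous $(\gam^\ep)'(V^\ep-\psi)$ — on the support of $\sig^\ep$, rather than just the penalization $\gam^\ep$ itself. The penalization itself is tamed by Proposition \ref{prop:est1}(i) and Lemma \ref{lem:key1}(ii), but its derivative scales one power of $\del^{1/4} = \ep^{1/2}$ worse, which is precisely why the scales $\del = \ep^2$ and $\gam^\del(r) = r_+^2/(2\del^{1/2})$ were chosen so delicately. I expect the heart of the proof to be an estimate showing $\iint (\gam^\ep)'(w^\ep-\psi)^2 \sig^\ep$, or a suitably weighted version thereof, is controlled — leveraging that on $\supp \sig^\ep$ the quantity $w^\ep - \psi$ is genuinely small (of order $\ep^{1/2}$ up to the bound in Proposition \ref{prop:est1}(i)), combined with the identity Proposition \ref{prop:element}(ii) which already gives $\iint (\gam^\ep)'\sig^\ep \le \ep$. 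Balancing these against the $\ep^7$ weight is the calculation that makes parts (i)–(iii) work, and getting every exponent right there is where the real care is needed.
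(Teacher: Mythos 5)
Your outline of part (i) has the right architecture (Bernstein method applied to $W^\ep = V^\ep - w^\ep$, weighted against $\sig^\ep$, with the good terms $a^{ij}W^\ep_{x_ix_k}W^\ep_{x_jx_k}$ and $\ep^4|D^2W^\ep|^2$ and the trace inequality \eqref{SV-a2}), but the exponent accounting is off. The factor $\ep^{-3}$ that ultimately forces the $\ep^7$ weight does not come from ``$\ep^{-1}$ from $\ep w^\ep_t$'' or ``$\ep^{-3/4}$ from the penalization derivative.'' It comes from controlling the cross term $|D^2V^\ep||DW^\ep|^2$ by splitting $D^2V^\ep = D^2W^\ep + D^2w^\ep$, applying Young's inequality $|D^2W^\ep||DW^\ep|^2 \le \tfrac{\ep^4}{2}|D^2W^\ep|^2 + \tfrac{C}{\ep^4}|DW^\ep|^2$, then invoking Lemma~\ref{lem:key1}(i) (giving $\tfrac{C}{\ep^4}\cdot\ep = C\ep^{-3}$) and Lemma~\ref{lem:est2} (giving $\iint\ep^4|D^2w^\ep|^2\sig^\ep\le C$, hence $\iint|D^2w^\ep|^2\sig^\ep \le C\ep^{-4}$, contributing $C\ep^{-2}$ via Cauchy--Schwarz). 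The penalization-derivative difference, by contrast, contributes only $O(\ep^{1/2})$ via \eqref{comb-3}, using \eqref{key-1} and the Lipschitz constant $\ep^{-1/2}$ of $(\gam^\ep)'$.

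The genuine gap is in part (ii). Your claim that (ii) follows by ``interpolating'' the $O(1)$ bound of (i) with the $O(\ep)$ gradient bound of Lemma~\ref{lem:key1}(i), splitting via Young's inequality, cannot work: the target $a^{ij}a^{ll}W^\ep_{x_ix_k}W^\ep_{x_jx_k}$ is purely a degenerate quadratic form in $D^2W^\ep$, carrying no factor of $DW^\ep$ and no $\ep$-dependence in the weight $a^{ij}a^{ll}$, so there is nothing for Young's inequality to trade against. The crude comparison $a^{ij}a^{ll}W^\ep_{x_ix_k}W^\ep_{x_jx_k}\le C|D^2W^\ep|^2$ together with (i) yields only $O(\ep^{-7})$, which is useless. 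What the paper actually does is multiply the \emph{same} differentiated identity by $a^{ll}$, set $\Phi := a^{ll}|DW^\ep|^2/2$, and run a second Bernstein-type computation in which $a^{ll}a^{ij}W^\ep_{x_ix_k}W^\ep_{x_jx_k}$ is retained on the left as the good term (once the trace-inequality errors, now involving $a^{ll}a^{ij}_{x_k}W^\ep_{x_ix_j}W^\ep_{x_k}$ and $a^{ll}_{x_i}a^{ij}W^\ep_{x_jx_k}W^\ep_{x_k}$, are absorbed with fraction $\tfrac14$). The $O(\sqrt{\ep})$ then comes directly from the penalization-difference bound \eqref{comb-3} and Lemma~\ref{lem:key1}(i), not from any interpolation of (i). Your reduction of (iii) to (ii) via the matrix algebra of \eqref{Bern-2} is correct.
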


Let us first present the proof of Theorem \ref{thm:key}
by using Lemmas \ref{lem-1}, \ref{lem:key1} and \ref{lem:key2}
before entering the technical computations of Lemmas \ref{lem:key1}, \ref{lem:key2}.
\begin{proof}[Proof of Theorem {\rm\ref{thm:key}}]
Choose $x_0$ such that
$\ep|w^\ep_t(x_0,1)|=\ep\|w^\ep_t(\cdot,1)\|_{\Li(\T^n)}$. 
Thanks to Lemma \ref{lem-1}, and the existence of solutions of (EO)$_\ep$, 
\begin{align*}
&\ep \| w^\ep_t (\cdot,1) \|_{L^{\infty}(\T^n)}=\ep|w^\ep_t(x_0,1)| \\
=&\, \left| \iint_{\T^n\times[0,1]} 
\big[a^{ij}w^{\ep}_{x_ix_j}-H(x,Dw^\ep)-\gam^{\ep}(w^{\ep}-\psi)
+\ep^4\Del w^\ep\big] \sig^\ep \,dx\,dt \right| \\
\leq&\, 
\Big| \iint_{\T^n\times[0,1]} 
\big[a^{ij}(w^{\ep}-V^{\ep})_{x_ix_j}-H(x,Dw^\ep)+H(x,DV^\ep)\\
&\hspace*{3cm}
-\gam^{\ep}(w^{\ep}-\psi)+\gam^{\ep}(V^{\ep}-\psi)
+\ep^4\Del (w^\ep-V^{\ep})\big] \sig^\ep \,dx\,dt \Big| + |c^\ep|. 
\end{align*}
By Proposition \ref{prop:element} (iii), Lemmas \ref{lem:key1} and \ref{lem:key2}, 
\begin{align*}
&\ep \| w^\ep_t (\cdot,1) \|_{L^{\infty}(\T^n)}\\
\le&\, 
C\iint_{\T^n\times[0,1]} 
\big[ 
|a^{ij}W^\ep_{x_ix_j}|+|DW^\ep| \\
&\hspace*{3cm}
+|\gam^{\ep}(w^{\ep}-\psi)-\gam^{\ep}(V^{\ep}-\psi)|
+\ep^4|\Del W^\ep| \big] \sig^\ep \,dx\,dt + C \ep^2 \\
\le&\, 
C\left(\iint_{\T^n\times[0,1]} 
\big|a^{ij}W^\ep_{x_ix_j}\big|^2\sig^{\ep}\,dx\,dt\right)^{1/2}\cdot
\left(\iint_{\T^n\times[0,1]} \sig^{\ep}\,dx\,dt\right)^{1/2}\\
&+
C\left(\iint_{\T^n\times[0,1]} 
|DW^\ep|^2\sig^{\ep}\,dx\,dt\right)^{1/2} \cdot
\left(\iint_{\T^n\times[0,1]} \sig^{\ep}\,dx\,dt\right)^{1/2}
\\
&
+
\iint_{\T^n\times[0,1]} 
\big(\gam^{\ep}(w^{\ep}-\psi)+\gam^{\ep}(V^{\ep}-\psi)\big)
\sig^{\ep}\,dx\,dt\\
&
+C\ep^4\left(\iint_{\T^n\times[0,1]} 
|D^2W^\ep|^2 \sig^\ep \,dx\,dt\right)^{1/2}\cdot
\left(\iint_{\T^n\times[0,1]} \sig^{\ep}\,dx\,dt\right)^{1/2}
 + C \ep^2 \\
\le&\, 
C \ep^{1/4}, 
\end{align*}
which finishes the proof. 
\end{proof}

\begin{proof}[Proof of Lemma {\rm\ref{lem:key1}}]
Notice first that 
\[
\gam(r)-\gam(s)\ge\gam'(s)(t-s)+\frac{1}{2}|r_{+}-s_{+}|^2 \ \text{for all} \ r,s\in\R, 
\]
where $r_{+}=\max\{0,r\}$. 
Thus, 
\begin{align*}
\gam^{\ep}(V^\ep-\psi)-\gam^{\ep}(w^\ep-\psi)
\ge&\, 
(\gam^{\ep})'(w^\ep-\psi)\cdot(V^\ep-w^\ep)
+\frac{1}{2}\Big|\frac{(V^{\ep}-\psi)_{+}-(w^{\ep}-\psi)_{+}}{\ep^{1/2}}\Big|^2.
\end{align*}

Subtracting equation (A)$_{\ep}$ from (EO)$_\ep$,
by the above inequality and the uniform convexity of $H$, we get
\begin{align*}
0=&\,
\ep (V^\ep-w^\ep)_t -a^{ij}(V^\ep-w^\ep)_{x_ix_j}
+ H(x,DV^\ep)-H(x,Dw^\ep)\\
&+\gam^{\ep}(V^{\ep}-\psi)-\gam^{\ep}(w^{\ep}-\psi)
-\ep^4\Del (V^\ep-w^\ep)-c^\ep\\
\ge&\, 
\cL^{\ep}[W^{\ep}]
+\theta | DW^\ep|^2
+\frac{1}{2}\Big|\frac{(w^{\ep}-\psi)_{+}-(V^{\ep}-\psi)_{+}}{\ep^{1/2}}\Big|^2
-c^\ep.
\end{align*}
Multiply the above inequality by $\sig^\ep$ and integrate by parts on $[0,1] \times \T^n$ to deduce that
\begin{equation} \label{key-1}
\iint_{\T^n\times[0,1]}\left( |DW^\ep|^2
+\Big|\frac{(w^{\ep}-\psi)_{+}-(V^{\ep}-\psi)_{+}}{\ep^{1/2}}\Big|^2\right) \sig^\ep\,dx\,dt \le C \ep,  
\end{equation}
implies (i). 
We next prove (ii).
By the explicit formula of $\gam^\ep$, 
\begin{align}
&\iint_{\T^n \times [0,1]} \gam^\ep(w^\ep-\psi)\sig^\ep\,dx\,dt
=\iint_{\T^n \times [0,1]}  \frac{ (w^\ep-\psi)^2_+}{2\ep} \sig^\ep\,dx\,dt \notag\\
=\,&
\iint_{\T^n \times [0,1]} \frac{(w^\ep-\psi)_+}{2} (\gam^\ep)'(w^\ep-\psi) \sig^\ep\,dx\,dt \notag\\
\leq\,&
 C\ep^{1/2} \iint_{\T^n \times [0,1]}  (\gam^\ep)'(w^\ep-\psi) \sig^\ep\,dx\,dt
\leq C \ep^{3/2}, \label{key-add}
\end{align}
where we used Proposition \ref{prop:est1} (i) and Proposition \ref{prop:element} (ii)
in the last two inequalities. 
Combining \eqref{key-1} and \eqref{key-add}, we deduce further that
\begin{align*}
&\iint_{\T^n \times [0,1]} \gam^\ep(V^\ep-\psi)\sig^\ep\,dx\,dt
=\iint_{\T^n \times [0,1]} \frac{(V^\ep-\psi)_+^2}{2\ep}\sig^\ep\,dx\,dt\\
\leq \,&
\iint_{\T^n \times [0,1]} \frac{(w^\ep-\psi)_+^2}{\ep}\sig^\ep\,dx\,dt
+
\iint_{\T^n\times[0,1]}
\Big|\frac{(w^{\ep}-\psi)_{+}-(V^{\ep}-\psi)_{+}}{\ep^{1/2}}\Big|^2 \sig^\ep\,dx\,dt\\
\leq\, &
C(\ep^{3/2}+\ep)\le C\ep,
\end{align*}
which implies (ii). 
\end{proof}

\begin{proof}[Proof of Lemma {\rm\ref{lem:key2}}]
Subtract (A)$_{\ep}$ from (EO)$_\ep$ and differentiate with respect to $x_k$ to get
\begin{multline*}
\ep (V^\ep-w^\ep)_{x_k t} -a^{ij} ( (V^\ep-w^\ep)_{x_k})_{x_i x_j} 
-a^{ij}_{x_k}(V^\ep-w^\ep)_{x_i x_j}\\
+D_p H(x,DV^\ep)\cdot DV^\ep_{x_k} - D_p H(x,Dw^\ep) \cdot Dw^\ep_{x_k}
+ H_{x_k}(x,DV^\ep)-H_{x_k}(x,Dw^\ep)\\
+(\gam^{\ep})'(V^\ep-\psi)(V^\ep_{x_k}-\psi_{x_k})
-(\gam^{\ep})'(w^\ep-\psi)(w^\ep_{x_k}-\psi_{x_k})
= \ep^4 \Del(V^\ep-w^\ep)_{x_k}.
\end{multline*}
Let $\varphi(x,t):=|DW^\ep|^2/2$. 
Multiplying the last identity by $W^\ep_{x_k}$
and summing up with respect to $k$, we achieve that
\begin{align}
&L^\ep[\varphi]+(\gam^{\ep})' \varphi+\ep^4|D^2W^\ep|^2\nonumber\\
=&\,
-a^{ij}W^\ep_{x_ix_k}W^\ep_{x_jx_k}
+a^{ij}_{x_k}W^\ep_{x_ix_j}W^\ep_{x_k}\nonumber\\
&
-\left[ \Big( D_p H(x,DV^\ep)-D_p H(x,Dw^\ep) \Big) \cdot DV^\ep_{x_k} \right]
W^\ep_{x_k}\nonumber\\
&-\Big( D_x H(x,DV^\ep)-D_x H(x,Dw^\ep) \Big) \cdot DW^\ep
\nonumber\\
&
-\big((\gam^{\ep})'(V^\ep-\psi)-(\gam^{\ep})'(w^\ep-\psi)\big)
D(V^{\ep}-\psi)\cdot DW^\ep. \label{eq:subtract}
\end{align}
We will use this identity to bound the integral of 
$\ep^4|D^2W^\ep|^2$ on the support $\sig^\ep$ on $\T^n\times[0,1]$
by estimating the terms on the right side. Note first that $(\gam^\ep)' \varphi \geq 0$.

By the same computation as in \eqref{Bern-1}, we have 
\begin{align}
&\iint_{\T^n\times[0,1]}
\left(-a^{ij}W^{\ep}_{x_ix_k}W^{\ep}_{x_jx_k}
+a^{ij}_{x_k}W^{\ep}_{x_ix_j}W^{\ep}_{x_k}\right)\sig^{\ep}\,dx\,dt \notag\\
\le&\, 
C\iint_{\T^n\times[0,1]}
|DW^\ep|^2\sig^{\ep}\,dx\,dt
\le C\ep. \label{comb-1}
\end{align}
Also, we have 
\begin{align}
&
\iint_{\T^n\times[0,1]}
\left|\left[ \Big( D_p H(x,DV^\ep)-D_p H(x,Dw^\ep) \Big) \cdot DV^\ep_{x_k} \right]
W^\ep_{x_k}\right|\sig^{\ep}\,dx\,dt\notag\\
\le&\, 
C \iint_{\T^n\times[0,1]}
|D^2 V^\ep|\  |DW^\ep|^2\sig^{\ep}\,dx\,dt\notag\\
\le&\, 
C \iint_{\T^n\times[0,1]}
\big[|D^2 W^\ep|\  |DW^\ep|^2 + |D^2 w^\ep|\  |DW^\ep|^2\big]
\sig^{\ep}\,dx\,dt\notag
\\
\le&\, 
\iint_{\T^n\times[0,1]}
\Big[\dfrac{\ep^4}{2}  |D^2W^\ep|^2
+\dfrac{C}{\ep^4} |DW^\ep|^2\Big]\sig^{\ep}\,dx\,dt \notag\\
&+ C
\left(\iint_{\T^n\times[0,1]}
|D^2 w^\ep|^2\sig^{\ep}\,dx\,dt\right)^{1/2}\cdot
\left(\iint_{\T^n\times[0,1]}\sig^{\ep}\,dx\,dt\right)^{1/2}\notag\\
\le&\, 
\dfrac{\ep^4}{2}\iint_{\T^n\times[0,1]}
 |D^2W^\ep|^2\sig^{\ep}\,dx\,dt
 +\frac{C}{\ep^4}\cdot\ep+\frac{C}{\ep^2}. \label{comb-2}
\end{align}
in view of Lemma \ref{lem:est2}.

Furthermore,
\begin{align}
&
\iint_{\T^n\times[0,1]}
\big|\big((\gam^{\ep})'(V^\ep-\psi)-(\gam^{\ep})'(w^\ep-\psi)\big)(V^{\ep}-\psi)_{x_k}W^{\ep}_{x_k}\big|\sig^{\ep}\,dx\,dt \notag\\
\leq&\,
C\iint_{\T^n\times[0,1]}
\frac{1}{\ep^{1/2}}\Big|\frac{(V^{\ep}-\psi)_{+}-(w^{\ep}-\psi)_{+}}{\ep^{1/2}}\Big|\cdot|DW^{\ep}|\sig^{\ep}\,dx\,dt\notag\\
\le&\,
\frac{C}{\ep^{1/2}}
\left(\iint_{\T^n\times[0,1]}
\Big|\frac{(V^{\ep}-\psi)_{+}-(w^{\ep}-\psi)_{+}}{\ep^{1/2}}\Big|^2\sig^{\ep}\,dx\,dt\right)^{1/2}
\cdot\left(\iint_{\T^n\times[0,1]}
|DW^{\ep}|^2\sig^{\ep}\,dx\,dt\right)^{1/2}\notag\\
\le&\,
C\ep^{1/2} \label{comb-3}
\end{align}
in light of \eqref{key-1} and Lemma \ref{lem:key1} (i). 
We used the scaling of the penalization crucially in the above computation. Indeed,
the behavior of $(\gam^\ep)'$ is pretty bad 
and the dangerous factor $\ep^{-1/2}$ appears in lines 2--3 of \eqref{comb-3}.
It is however being absorbed by the two good terms in line 3 of \eqref{comb-3}.

Combine \eqref{eq:subtract}--\eqref{comb-3} to get
\begin{align*}
\ep^4 \iint_{\T^n\times[0,1]} |D^2W^\ep|^2 \sig^\ep\,dx\,dt 
\le  \dfrac{C}{\ep^3},
\end{align*}
which is the conclusion of (i). 
\smallskip

We next prove (ii). 
We multiply \eqref{eq:subtract} by $a^{ll}$ and set 
$\Phi(x,t):=a^{ll}\varphi(x,t)$. By usual computations,
\begin{align*}
&L^\ep[\Phi]+(\gam^\ep)'(w^\ep-\psi)\Phi+\ep^4a^{ll}|D^2W^\ep|^2+a^{ll}a^{ij}W^{\ep}_{x_ix_k}W^{\ep}_{x_jx_k}\\
\le&\,
a^{ll}a^{ij}_{x_k}W^{\ep}_{x_ix_j}W^{\ep}_{x_k}-4a^{ll}_{x_i}a^{ij}W^{\ep}_{x_jx_k}W^{\ep}_{x_k}-2\ep^4Da^{ll}\cdot D\varphi
+C|DW^{\ep}|^2\\
&+a^{ll} |D^2V^\ep|\cdot |DW^\ep|+
a^{ll}
\left|(\gam^{\ep})'(V^\ep-\psi)-(\gam^{\ep})'(w^\ep-\psi)\big)
D(V^{\ep}-\psi)\cdot DW^{\ep}\right|,  
\end{align*}
where $p^{ij}, d^{i}$ are components of $P, D$ appearing  
in the proof of Proposition \ref{prop:e.CGMT}.

Notice that 
\begin{align*}
a^{ij}_{x_k} a^{ll} W^{\ep}_{x_i x_j} W^{\ep}_{x_k}
=\tr(A) \tr(A_{x_k} D^2W^{\ep})W^{\ep}_{x_k}
\leq
\frac{1}{4}a^{ll}a^{ij}W^{\ep}_{x_ix_k}W^{\ep}_{x_jx_k}+ C|DW^\ep|^2
\end{align*}
and, 
\begin{align*}
&-4a^{ll}_{x_i}a^{ij}W^{\ep}_{x_j x_k} W^{\ep}_{x_k}
=-4a^{ll}_{x_i} p^{mi} p^{mj} d^m W^{\ep}_{x_j x_k} W^{\ep}_{x_k}\\  
\leq&\,
C \left(\sum_l \sqrt{d^l}\right)\sum_{k,m} d^m \left|  p^{mj} W^{\ep}_{x_j x_k}\right| |DW^{\ep}|\\
\leq &\,
\frac{1}{4}\left( \sum_{l} d^l \right) \sum_{k,m} \left ( \sum_j \sqrt{d^m} p^{mj}W^{\ep}_{x_j x_k} \right)^2 +  C|DW^{\ep}|^2\\
=&\,
\frac{1}{4}a^{ll}a^{ij}W^{\ep}_{x_ix_k}W^{\ep}_{x_jx_k}+ C|DW^{\ep}|^2. 
\end{align*}
Noting that 
$(\gam^\ep)'(w^\ep-\psi)\Phi+\ep^4a^{ll}|D^2W^\ep|^2\ge0$, 
by the above computations with 
Lemma \ref{lem:key1} (i), \eqref{comb-3}, 
we get the conclusion of (ii).

We finally show that (iii) is a direct consequence of (ii). Indeed,
\begin{align*}
&\left(a^{ij}W^\ep_{x_ix_j}\right)^2
=\left(p^{mi} p^{mj} d^m W^\ep_{x_i x_j} \right)^2\\
\leq &\,
C\left(\sum_{j,m}\left|\sum_{i}p^{mi}d^m W^\ep_{x_ix_j}\right| \right)^2
=C\left(\sum_{j,m}\sqrt{d^m}\left|\sum_{i}p^{mi}\sqrt{d^m} W^\ep_{x_ix_j}\right| \right)^2\\
\leq &\,
C \sum_{j,m} d^m \left|\sum_{i}p^{mi}\sqrt{d^m} W^\ep_{x_ix_j} \right|^2
\leq C \left(\sum_l d^l \right)  \sum_{j,m}  \left|\sum_{i}p^{mi}\sqrt{d^m} W^\ep_{x_ix_j} \right|^2\\
=&\,
Ca^{ij} a^{ll} W^\ep_{x_i x_k} W^\ep_{x_j x_k}.
\end{align*}
The proof is complete.
\end{proof}


\begin{thebibliography}{30} 

\bibitem{BCD}
M. Bardi, I. Capuzzo-Dolcetta, 
\emph{Optimal control and viscosity solutions of Hamilton-Jacobi-Bellman equations}, 
Systems \& Control: Foundations \& Applications. Birkh\"auser Boston, Inc., Boston, MA, 1997.

\bibitem{B}
G. Barles, 
\emph{Solutions de viscosit\'e des \'equations de Hamilton-Jacobi}, 
Math\'ematiques \& Applications (Berlin), {\bf 17}, Springer-Verlag, Paris, 1994. 

\bibitem{B2}
G. Barles, 
\emph{Convergence of numerical schemes for degenerate parabolic equations arising in finance theory}, 
in Numerical methods in finance, Cambridge Univ. Press, Cambridge, 1997, pp. 1--21.


\bibitem{BS}
G. Barles, P. E. Souganidis, \emph{On the large time behavior of solutions of Hamilton--Jacobi equations}, 
SIAM J. Math. Anal. {\bf 31} (2000), no. 4, 925--939. 

\bibitem{BS2} 
G. Barles, P. E. Souganidis, 
\emph{Space-time periodic solutions and long-time behavior of solutions to quasi-linear parabolic equations}, 
SIAM J. Math. Anal. 32 (2001), no. 6, 1311--1323. 

\bibitem{BL}
A. Bensoussan,  J.-L. Lions, 
\emph{Applications of variational inequalities in stochastic control},
vol. 12. Studies in mathematics and its applications. Amsterdam: North-Holland, 
1982.

\bibitem{BFZ}
O. Bokanowski, N. Forcadel, H. Zidani, 
\emph{Reachability and minimal times for state constrained nonlinear problems without any controllability assumption}, 
SIAM J. Control Optim. 48 (2010), no. 7, 4292--4316. 

\bibitem{C}
L. A. Caffarelli, 
\emph{The obstacle problem revisited}, 
J. Fourier Anal. Appl. 4 (1998), no. 4-5, 383--402. 

\bibitem{CGMT}
F. Cagnetti, D. Gomes, H. Mitake, H. V. Tran, 
\emph{A new method for large time behavior of convex Hamilton--Jacobi equations: degenerate equations and weakly coupled systems}, 
submitted. 

\bibitem{CGT1} 
F. Cagnetti, D. Gomes, H. V. Tran, 
\emph{Aubry-Mather measures in the non convex setting},
{SIAM J. Math. Anal. {\bf 43} (2011), no. 6, 2601--2629}. 

\bibitem{CGT2}
F. Cagnetti, D. Gomes, H. V. Tran, 
\emph{Adjoint methods for obstacle problems and weakly coupled systems of {P}{D}{E}}, 
ESAIM: Control, Optimisation and Calculus of Variations {\bf 19} (2013), no. 3, 754--779. 

\bibitem{DS}
A. Davini, A. Siconolfi, 
\emph{A generalized dynamical approach to the large-time behavior of solutions of Hamilton--Jacobi equations}, 
SIAM J. Math. Anal. {\bf 38} (2006), no. 2, 478--502.

\bibitem{Ev1}
L. C. Evans,
\emph{Adjoint and compensated compactness methods for Hamilton--Jacobi PDE}, 
Archive for Rational Mechanics and Analysis {\bf 197} (2010), 1053--1088.


\bibitem{F2}
A. Fathi, 
\emph{Sur la convergence du semi-groupe de Lax-Oleinik}, 
C. R. Acad. Sci. Paris S\'er. I Math. {\bf 327} (1998), no. 3, 267--270.

\bibitem{FPP}
M. Di Francesco, A. Pascucci, S. Polidoro, 
\emph{The obstacle problem for a class of hypoelliptic ultraparabolic equations}, 
Proc. R. Soc. Lond. Ser. A Math. Phys. Eng. Sci. 464 (2008), no. 2089, 155--176.

\bibitem{F}
A. Friedman, 
\emph{Variational principles and free-boundary problems}, 
Wiley, New York, 1982.


\bibitem{Hy}
R. Hynd, 
\emph{Analysis of Hamilton-Jacobi-Bellman equations arising in stochastic singular control},
 ESAIM Control Optim. Calc. Var. {\bf 19} (2013), no. 1, 112--128. 

\bibitem{IS}
N. Ichihara, S.-J. Sheu, 
\emph{Large time behavior of solutions of Hamilton-Jacobi-Bellman equations with quadratic nonlinearity in gradients}, 
SIAM J. Math. Anal. {\bf45} (2013), no. 1, 279--306.

\bibitem{I2008}
H. Ishii, 
\emph{Asymptotic solutions for large-time of Hamilton--Jacobi equations in Euclidean n space}, 
Ann. Inst. H. Poincar\'e Anal. Non Lin\'eaire, {\bf25} (2008), no 2, 231--266. 

\bibitem{KS}
D. Kinderlehrer, G. Stampacchia, 
\emph{An introduction to variational inequalities and their applications},
 Academic Press, New York, 1980.

\bibitem{LN}
O. Ley, V. D. Nguyen, 
\emph{Large time behavior for some nonlinear degenerate parabolic equations}, 
preprint.  

\bibitem{L}
P.-L. Lions, 
\emph{Generalized solutions of Hamilton-Jacobi equations}, 
Research Notes in Mathematics, Vol. {\bf 69}, 
Pitman, Boston, Masso. London, 1982.

\bibitem{LPV}  
P.-L., Lions, G. Papanicolaou, S. R. S. Varadhan,  
Homogenization of Hamilton--Jacobi equations, 
unpublished work (1987). 

\bibitem{M2}
H. Mitake, 
\emph{The large-time behavior of solutions of the Cauchy-Dirichlet 
problem for Hamilton-Jacobi equations}, 
NoDEA Nonlinear Differential Equations App. {\bf 15} (2008), 
no. 3, 347--362. 

\bibitem{M3}
H. Mitake, 
\emph{Large time behavior of solutions of Hamilton-Jacobi equations with periodic boundary data}, Nonlinear Anal. 
{\bf 71} (2009), no. 11, 5392--5405. 


\bibitem{MT3}
H. Mitake, H. V. Tran, 
\emph{A dynamical approach to the large-time behavior of solutions to weakly coupled systems of Hamilton--Jacobi equations},
to appear in J. Math. Pures Appl. 

\bibitem{NR}
G. Namah, J.-M. Roquejoffre, 
\emph{Remarks on the long time behaviour of the solutions of Hamilton--Jacobi equations}, 
Comm. Partial Differential Equations {\bf 24} (1999), no. 5-6, 883--893. 

\bibitem{P}
A. Pascucci, 
\emph{Free boundary and optimal stopping problems for American Asian options}, 
Finance Stoch. 12 (2008), no. 1, 21--41. 

\bibitem{SV}
D. W. Stroock and  S. R. S. Varadhan,
Multidimensional diffusion processes. 
Reprint of the 1997 edition. Classics in Mathematics. Springer-Verlag, Berlin, 2006. xii+338 pp.

\bibitem{Ta}
T. Tabet Tchamba, 
\emph{Large time behavior of solutions of viscous Hamilton-Jacobi equations with superquadratic Hamiltonian}, 
Asymptot. Anal. 66 (2010), no. 3-4, 161--186.

\bibitem{T1}
H. V. Tran, 
\emph{Adjoint methods for static Hamilton-Jacobi equations},
 Calculus of Variations and PDE {\bf 41} (2011), 301--319. 
\end {thebibliography}

\end{document}